\definecolor{red}{rgb}{1,0,0}
\definecolor{green}{rgb}{0,1,0}
\definecolor{blue}{rgb}{0,0,1}
\definecolor{refkey}{gray}{.625}
\definecolor{labelkey}{gray}{.625}
\newcommand{\dce}{d_{\rm CE}}
\newcommand{\cech}{\delta}
\newcommand{\id}{\operatorname{id}}
\newcommand{\R}{\mathbb{R}}
\newcommand{\g}{\mathfrak{g}}
\newcommand{\bd}{\begin{displaymath}}
\newcommand{\ed}{\end{displaymath}}
\newcommand{\be}{\begin{equation}}
\newcommand{\ee}{\end{equation}}
\newcommand{\K}{\mathbb{K}}
\newcommand{\bsection}[1]{\Gamma(#1)}
 \def\title@font{\normalsize\bfseries}
 \let\ltx@maketitle\@maketitle
 \def\@maketitle{\bgroup%
 \let\ltx@title\@title%
 \def\@\title{\resizebox{\textwidth}{!}{%
  \mbox{\title@font\ltx@title}%
 }}%
 \ltx@maketitle%
 \egroup}
\theoremstyle{plain}
\newtheorem*{zorn*}{Zorn's lemma}
\newtheorem*{tychonoff*}{Tychonoff's theorem}
\newtheorem{thm}{Theorem}[section]
\newtheorem{lem}[thm]{Lemma}
\newtheorem{Cor}[thm]{Corollary}
\newtheorem{Thm}[thm]{Theorem}
\newtheorem*{theorem*}{Theorem}
\newtheorem{Def}[thm]{Definition}
\newtheorem{prop}[thm]{Proposition}
\newtheorem{Ex}[thm]{Example}
\newtheorem{Rem}[thm]{Remark}
\begin{document}
\def\c{\mathscr{C}}
\def\ce{\mathrm{CE}}
\def\C{\mathbb{C}}
\def\CE{\mathrm{CE}}
\def\D{\mathcal{D}}
\def\E{\mathscr{E}}
\def\ev{\mathrm{ev}}
\def\F{\mathscr{F}}
\def\g{\mathfrak{g}}
\def\H{\textbf{H}}
\def\j{\mathscr{J}}
\def\L{\mathcal{L}}

\def\M{\mathcal{M}}
\def\m{\mathfrak{m}}
\def\t{\mathfrak{t}}
\def\O{\mathcal{O}}
\def\P{\mathcal{P}}
\def\r{\mathcal{R}}
\def\U{\mathcal{U}}
\def\v{\mathscr{A}}
\def\w{\mathfrak{W}}
\def\L{\mathcal{L}}
\def\X{\mathbb{X}}
\def\Y{\mathbb{Y}}
\def\spec{\text{spec}}
\def\Im{\text{Im}}
\def\coker{\operatorname{coker}}
\def\Ext{\operatorname{Ext}}
\def\End{\operatorname{End}}
\def\pr{\operatorname{pr}}
\def\id{\operatorname{id}}
\def\Der{\operatorname{Der}}
\def\Hom{\operatorname{Hom}}
\def\Jet{\operatorname{Jet}}
\def\Map{\operatorname{Map}}
\def\Mod{\operatorname{Mod}}
\def\sgn{\operatorname{sgn}}
\def\sh{\operatorname{sh}}

\newcommand{\structuresheaf}[1]{\mathcal{O}_{\mathcal{#1}}}
\newcommand{\CATderivationA}{\operatorname{dgDer}_\A}

\newcommand{\CATLeibnizoneA}{\operatorname{Leib}_\infty(\A)}

\newcommand{\CAThtyAmod}{\operatorname{H}(\operatorname{dg}\A)}

\newcommand{\Linfty}{ L_\infty  }

\newcommand{\piepartial}{\eth}

\newcommand{\inputvariable}{\cdot}

\newcommand{\Artinring}{\mathfrak{a}}

 \newcommand{\MCgA}{\textrm{MC}_\g(\v)}
\newcommand{\MCLPgA}{\textrm{MC}_{\Omega^\bullet_A(B)}(\v)}
\newcommand{\MCfunctor}{{\rm MC}_{\Omega^\bullet_A(B)}}

\newcommand{\algDefFctLA}{{\rm algDef}_{{(L,A)}}}
\newcommand{\weakDefFctLA}{{\rm wkDef}_{{(L,A)}}}

\newcommand{\firstDef}{{\rm Def}_\g}
\newcommand{\MCfunctorg}{{\rm MC}_\g}

\newcommand{\pairing}[2]{\langle #1,#2\rangle}
\newcommand{\rank}{\mathrm{rank}}

\newcommand{\Diff}{\mathrm{Diff}}
\newcommand{\CinfMK}{C^\infty(M,\K)}

\newcommand{\projection}{\mathrm{pr}}
\newcommand{\projectionA}{\mathrm{pr}_A}

\newcommand{\projectionB}{\mathrm{pr}_B}

\newcommand{\deltap}{\delta}
\newcommand{\deltas}{s}
\newcommand{\KScorrespondence }{\mathrm{KSc}}
\newcommand{\KSmap}{\mathrm{KS}}
\newcommand{\ArtCat}{\textbf{Art}}
\newcommand{\PhiA}{\Phi_A}
\newcommand{\PhiB}{\Phi_B}
\newcommand{\adLb}{{\mathrm{ad}_b  }}
\newcommand{\adLa}{{\mathrm{ad}_a  }}
\newcommand{\maximalidealofartin}{\m_\v}
 \newcommand{\bigiota}{\mathcal{I}}

\newcommand{\SpecofA}{\mathbb{P}}

\newcommand{\FINISH}{\textcolor{red}{FINISH}}

\title{Atiyah Classes in the Context of Generalized Complex Geometry}

\author{Dadi Ni}
\address{School of Mathematics and Statistics, Henan University} 
\email{\href{mailto:nidd@henu.edu.cn}{nidd@henu.edu.cn}}

\thanks{Research  supported by the Natural Science Foundation of Henan Province
 (No. 252300421766).}

\begin{abstract}
In analogy to the classical holomorphic setting, Lang, Jia and Liu introduced the notion of the Atiyah class for a generalized holomorphic vector bundle using three different approaches: leveraging $\rm{\check{C}}$ech cohomology, employing the first jet short exact sequence, and adopting the perspective of Lie algebroid pairs.  The purpose of this note is to establish the equivalence among these diverse definitions of the Atiyah class.
\end{abstract}

\maketitle

\begin{itemize}
	\item 
	{\it Keywords:} Atiyah class, Generalized complex structure.\\
\end{itemize}

\tableofcontents
\parskip = 0em 

\section{Introduction}
Generalized complex geometry was introduced by Hitchin \cite{Hitchin03}, and further developed by Gualtieri  \cite{Gualtieri} motivated by the study of mirror symmetry. It is common generalization of both symplectic and complex geometry. Another important class of generalized complex structures is holomorphic Poisson structures. Its applications include the study of $2$-dimensional supersymmetric quantum field theories, which occur in topological string theory, as well as compactification of string theory with fluxes. 

 A generalized complex structure on $M$ is 
an endomorphism $\mathcal{J}$
of the generalized tangent bundle $TM\oplus T^*M$ (rather
than the tangent bundle $TM$) that satisfies the condition $\mathcal{J}^2=-1$. Moreover, this endomorphism is orthogonal with respect to the standard nondegenerate symmetric bilinear form on $TM\oplus T^*M$  and  its $\pm i$-eigenbundles $L_+$ and $L_-$ are involutive with respect to the Courant bracket $TM\oplus T^*M$. It can be observed that both $L_+$ and $L_-$ qualify as Lie algebroids.  Using the notion of Courant algebroid given by Liu, Weinstein and Xu in \cite{LWX}, a generalized complex structure can be defined as a complex Dirac structure $L_+$ in $T_\C M\oplus T^*_\C M$ satisfying $L_+\cap \overline{L_+}=\{0\}.$ 

According to Gualtieri \cite{Gualtieri} and Hitchin \cite{Hitchin11}, a generalized holomorphic vector bundle over a generalized complex manifold $M$ is defined as a vector bundle $E$ with a Lie algebroid $L_-$-module structure. Note that the total space of such a bundle does not necessarily constitute a generalized complex manifold. The obstruction was discussed in \cite{Wang-JGP2011}. Later on, Lang, Jia and Liu \cite{DGA2023} introduced a distinct definition of generalized holomorphic vector bundle from a geometric point of view, ensuring that the total space $E$ is indeed a generalized complex manifold (see Definition~\ref{Definition3.1} for a detailed explanation). It turns out that this definition is a special case of Gualtieri's original concept. In this paper, we adopt the definition proposed by Lang, Jia and Liu.

Over the past two decades, extensive research has been conducted on generalized structures, employing diverse strategies and exploring their underlying mathematical frameworks.  The local structure was elaborated by Abouzaid-Boyarchenko \cite{Abouzaid-Boyarchenko}, and  was further  strengthened by Bailey \cite{Bailey}. Poon extended the notion of abelian complex structure to the context of generalized complex geometry \cite{Poon2021}. Grandini, Poon and Rolle \cite{Grandini-Poon-Rolle} studied deformations of generalized complex structures and their associated differential Gerstenhaber algebras . In \cite{MR3157903}, Wang examined the deformation theory of generalized holomorphic structures, allowing for variations in both the  generalized complex structure and the generalized holomorphic structure.

In this paper, we study the Atiyah class of a generalized holomorphic vector bundle. The Atiyah class of a holomorphic vector bundle $E$ over a complex manifold $M$, as initially introduced by Atiyah in \cite{Atiyah}, constitutes the obstruction to the existence of a holomorphic connection on said holomorphic vector bundle.  This Atiyah class plays a crucial role in the Rozansky-Witten theory \cite{Kapranov} and Kontsevich's work on deformation quantization \cite{Kontsevich}. Recently,  Chen, Sti\'{e}non and Xu introduced the notion of the Atiyah class for a Lie pair, i.e., a Lie algebroid with a Lie subalgebroid over the same base manifold \cite{CMP16}. 
Moreover, the Atiyah class of commutative dg algebras  and the twisted version  were studied in \cite{CLX}.  
We also refer to \cite{CLangX, Hong, MSX} for other versions of Atiyah classes in different settings.

Drawing inspiration from the method used to define the Atiyah class for a holomorphic vector bundle, Lang, Jia and Liu \cite{DGA2023} introduced three distinct approaches to define the Atiyah class for a generalized holomorphic vector bundle. Firstly, it is defined as a ${\rm \check{C}}$ech cohomology class $[\alpha_E]\in \check{\mathrm{H}}^1(M,G^*M\otimes \End(E))$, representing the obstruction to the existence of a generalized holomorphic connection (refer to Definition \ref{Def:Cech-Atiyah} for more details).
 For a generalized holomorphic
vector bundle $E$, Lang, Jia and Liu introduced the jet bundle $\mathfrak{J}^1 E$ and demonstrated that it fits into  a short exact sequence 
\[0\to G^*M\otimes E\to \mathfrak{J}^1 E\to E\to 0,\]
of generalized holomorphic vector bundles. The Atiyah class of $E$ is also characterized as the extension class
$$A(E)\in \Ext^1_{\mathcal{O}_M}(E, G^*M\otimes E)$$
of this short exact sequence (see Definition \ref{Def:Atiyah-jet} for more details). The third formulation of the Atiyah class arises from the point of view of Lie pairs. 
For a regular generalized complex manifold $M$, there exists a Lie pair $(T_\C M, \rho(L_-))$.  Based on the work \cite{CMP16} of Chen, Sti\'enon and Xu , the Atiyah class of a generalized holomorphic vector bundle can be defined as the Atiyah class $[R^\nabla]$ of the Lie pair $(T_\C M, \rho(L_-))$ and $\rho(L_-)$-module $E$.

In \cite{DGA2023}, Lang, Jia and Liu raised a question: What is the relation between these three distinct definitions? 
In this note, we will give a complete answer. We aim to  demonstrate the equivalence of these various definitions of the Atiyah class.

We investigate the ${\rm \check{C}}$ech cohomology and Lie algebroid cohomology theories on a generalized holomorphic vector bundle, establishing a fundamental relationship between these two cohomological frameworks. Using this correspondence, we construct a natural morphism 
$$\Pi_1\colon \check{\mathrm{H}}^1(M,G^*M\otimes \End(E)) \to \mathrm{H}^1(\rho(L_-), G^*M\otimes E),$$
which maps the ${\rm \check{C}}$ech cohomology class $[\alpha_E]$ to the Lie algebroid cohomology class $[R^\nabla]$.

Leveraging the fact that a generalized holomorphic vector bundle constitutes a $\rho(L_-)$-module, along with the work presented in \cite{Huebschmann90,CMP16}, we establish a natural injection $$\Psi\colon\Ext^1_{\mathcal{O}_M}(E, G^*M\otimes E)\to \mathrm{H}^1(\rho(L_-), G^*M\otimes E).$$ By utilizing the concept of the jet bundle arising from a Lie pair, we prove that the injection $\Psi$ maps the extension class $A(E)$ to  the Lie algebroid cohomology class $[R^\nabla]$. We also establish a canonical isomorphism from $\Ext^1_{\mathcal{O}_M}(E, G^*M\otimes E)$ to the ${\rm \check{C}}$ech cohomology group 
$\check{\mathrm{H}}^1(M,G^*M\otimes \End(E))$, and prove that this canonical map sends the extension class $A(E)$ to the ${\rm \check{C}}$ech cohomology class $[\alpha_E]$.
In other words, we obtain two cocycles representing the extension class.

In summary, we establish the following commutative diagram, which elucidates the interconnections among three distinct approaches to defining the Atiyah class:
\begin{equation*}
	\xymatrix@C=3pc@R=2pc{
		\Ext^1_{\mathcal{O}_M}(E, G^*M\otimes E) \ar[r]^{\Psi}\ar[d]_{\Phi}&\mathrm{H}^1(\rho(L_-), G^*M\otimes E)\\		\check{\mathrm{H}}^1(M,G^*M\otimes \End(E)).\ar[ur]_{\Pi_1}&
	}
\end{equation*}

Here is an outline of the present paper.
In Section $2$, we review the foundational aspects of generalized complex structures.  Section $3$  provides a comprehensive overview of generalized holomorphic vector bundles, along with detailed definitions of their $\rm{\check{C}}$ech cohomology and Lie algebroid cohomology. In Section $4$,  we establish a canonical identification between the Atiyah classes defined via the $\rm{\check{C}}$ech class and the Lie algebroid cohomology class associated with the Lie pair $(T_{\mathbb{C}} M,\rho(L_{-}))$. Section $5$ demonstrates the equivalence of Atiyah classes defined in terms of the extension classes of the first jet bundle $\mathfrak{J}^1E$ and their corresponding $\rm{\check{C}}$ech classes. Finally, in Section $6$, we prove that the Atiyah classes defined through the extension class and the cohomology class of the Lie pair $(T_{\mathbb{C}} M,\rho(L_{-}))$ are canonically identical.

\vskip 10pt

\noindent\textbf{Conventions and notations.}

\vskip 10pt

Throughout the paper, ``GC'' stands
for generalized complex, and ``GH'' stands for generalized holomorphic.  

\vspace{20pt}  

\section{Generalized complex manifolds}
We begin by revisiting the notion  of generalized complex (GC, for short) manifold, as presented in \cite{Gualtieri,DGA2023}. 
Let $M$ be a real $n$-dimensional smooth manifold. The direct sum $TM\oplus T^*M$ of its tangent and cotangent bundles is equipped with a canonical bilinear form which takes values in $C^\infty(M)$:
 \begin{equation}\label{pairing}
( X+\xi,Y+\eta) =\langle\xi, Y \rangle +\langle \eta, X\rangle,\qquad \forall X,Y\in \mathfrak{X}(M),\xi,\eta\in \Omega^1(M),
\end{equation} 
and a skew-symmetric bracket known as the \emph{Courant bracket}:
 \begin{eqnarray}\label{CB}
 [X+\xi,Y+\eta]:=[X,Y]+\mathcal{L}_X \eta-\mathcal{L}_Y \xi-\frac{1}{2}d(\langle \eta, X\rangle-\langle\xi, Y \rangle),
 \end{eqnarray}
 where $d$ denotes the de Rham differential on $M$. The Jacobi identity of this bracket fails, but it is controlled by a  coboundary:
\[[[e_1,e_2],e_3]+c.p.=\frac{1}{6}d\big(( [e_1,e_2],e_3)+( [e_2,e_3],e_1)+( [e_3,e_1],e_2)\big),\]
where $e_1,e_2,e_3\in \Gamma(TM\oplus T^*M)$ and c.p. denotes cyclic permutation.

\begin{Rem}
The vector bundle $TM\oplus T^*M$ together with bilinear form \eqref{pairing} and bracket \eqref{CB} is a Courant algebroid, which was introduced and well studied in \cite{LWX}. A {Dirac structure} of the Courant algebroid $TM\oplus T^*M$ is a maximal isotropic subbundle $A$ which is closed with respect to  Courant bracket \eqref{CB}. Obviously, a Dirac structure is a Lie algebroid. 
\end{Rem}

\begin{Def} \label{defgcg}\cite{Gualtieri}
A \textbf{GC structure} on $M$ is an endomorphism $\mathcal{J}:TM\oplus T^*M \to TM\oplus T^*M$ satisfying that 
\begin{itemize}
\item[\rm(1)] $\mathcal{J}^2=-1$;
\item[\rm(2)] $(\mathcal{J}e_1,\mathcal{J}e_2)=(e_1,e_2)$, for $e_1,e_2\in \Gamma(TM\oplus T^*M)$;
\item[\rm(3)] $\mathcal{J}$ is integrable; namely, the $\pm i$-eigenbundle $L_\pm\subset (TM\oplus T^*M)\otimes \mathbb{C}$ of $\mathcal{J}$ are closed under Courant bracket \eqref{CB}.
\end{itemize}
A manifold $M$ with a GC structure $\mathcal{J}$ is called a \textbf{GC manifold}. \end{Def}

A differential homeomorphism $f:(M,\mathcal{J}_M)\to (N,\mathcal{J}_N)$ between two GC manifolds is called a generalized holomorphic homeomorphism if  
\begin{equation*}
\begin{pmatrix} f_* & 0 \\0& (f^{-1})^* \end{pmatrix}\circ \mathcal{J}_M=\mathcal{J}_N\circ 
\begin{pmatrix} f_* & 0 \\0& (f^{-1})^* \end{pmatrix}.
\end{equation*}

Note that $L_-=\overline{L_+}$ and $L_+^*\cong L_{-}$ with respect to the pairing \eqref{pairing} on $T_{\mathbb{C}} M\oplus T_{\mathbb{C}}^* M=L_+\oplus L_{-}$. A  GC structure is in fact a complex Dirac structure $L_+$ in $T_{\mathbb{C}} M\oplus T_{\mathbb{C}}^* M$ satisfying that $L_+\cap \overline{L_+}=\{0\}$.  Note that both $L_+$ and $L_-$ constitute  Lie algebroids on $M$ with their
 anchor map $\rho$ defined as the natural projection $T_{\mathbb{C}} M\oplus T^*_{\mathbb{C}} M\to T_{\mathbb{C}} M$. 
We then have two Lie algebroid differentials
 \begin{eqnarray}\label{d+-}
 d_+:\Gamma(\wedge^\bullet L_+^*)\to \Gamma(\wedge^{\bullet+1} L_+^*),\qquad d_{-}:\Gamma(\wedge^\bullet L_{-}^*)\to \Gamma(\wedge^{\bullet+1} L_{-}^*).
 \end{eqnarray}
 In particular, for $w\in \Gamma(\wedge^k L_-^*)$, we have 
 \begin{eqnarray*}
 d_-(w)(u_1,\cdots,u_{k+1})&=&\sum_{i=1}^{k+1}(-1)^{i+1}\rho(u_i)w(u_1,\cdots,\hat{u_i},\cdots,u_{k+1})\\ &&+\sum_{i<j}(-1)^{i+j} 
 w([u_i,u_j],u_1,\cdots,\hat{u_i},\cdots,\hat{u_j},\cdots,u_{k+1}),
 \end{eqnarray*}
  for $u_1,\cdots,u_{k+1}\in \Gamma(L_-)$.
  
  For any $f\in C^\infty(M)$, we have $df=d_+f+d_-f$, 
where  $d_+$ and $d_-$ are defined in Equation \eqref{d+-} and $d$ is the de Rham differential on $M$. Furthermore, there is  a Poisson structure on $M$ given by 
\begin{eqnarray*}
\{f,g\}_M=(d_{+} f,d_{-} g),\qquad \forall f,g\in C^\infty(M),
\end{eqnarray*}
where $(\cdot,\cdot)$ is the pairing in Equation \eqref{pairing}.

\begin{Rem}
By \cite[Theorem 2.6]{LWX}, two transversal Dirac structures of a Courant algebroid constitute a Lie bialgebroid.
So the pair $(L_+,L_-)$ is a Lie bialgebroid.
\end{Rem}

\begin{Ex}
Complex and symplectic structures are two extreme cases of generalized complex structures. Holomorphic Poisson structures provide another type of generalized complex structures. See \cite[Example 2.3, 2.4, 2.5]{DGA2023} for details.
\end{Ex}

An endomorphism  $\mathcal{J}$ on $TM\oplus T^*M$ satisfying $\mathrm{(1)}$ and $\mathrm{(2)}$ in Definition \ref{defgcg}  can be written  in block form as 
\begin{equation*}
\mathcal{J}=\begin{pmatrix} J & \beta \\ B& -J^* \end{pmatrix},\qquad J\in \End(TM), B\in \Omega^2(M), \beta\in \mathfrak{X}^2(M).
\end{equation*}
 An \emph{$B$-field transformation}  (or, $B$-transform) is  defined to be an orthogonal automorphism, denote by $\exp(B)$, of $TM\oplus T^*M$ given by a real closed $2$-form $B\in \Omega^2(M)$ via:  
 \[X+\xi\mapsto X+\xi+\iota_X B,\qquad \forall X\in \mathfrak{X}(M),\xi\in \Omega^1(M).\]
If $\mathcal{J}$ defines a GC structure on $M$, then $\mathcal{J}'=\exp(B)\mathcal{J}\exp(-B)$ is another GC structure on $M$, which follows from the fact that $\exp(B)$ preserves the Courant bracket on $TM\oplus T^*M$ \cite{Gualtieri}. Moreover, the $\pm i$-eigenbundle of $\mathcal{J}'$ is given by $L'_\pm=\exp(B)(L_\pm).$

\begin{Def}
A function $f\in C^\infty(M)$ on a generalized complex manifold $(M,\mathcal{J})$ is called a {generalized holomorphic function} if  $d_{-} f=0$.
\end{Def}
Denote by $\mathcal{O}_M$ the sheaf of local generalized holomorphic functions on $M$. By definition, $\mathcal{O}_M=\ker d_-\subset C^\infty(M)$ is a subsheaf of the sheaf of smooth functions on $M$.

Let $E$ denote the distribution $\rho(L_+)$. Note that $E\cap \overline{E}$ is a distribution in $T_\C M$ which is stable under complex conjugation, and hence has the form $\Delta\otimes_\R \mathbb{C}$ for some distribution $\Delta\subset TM$. We refer to a point $m$ in $M$ as a regular point if the real dimension of $\Delta$ remains constant in a neighborhood of $m$. If every point in $M$ possesses this regularity, we term $M$ as a \textbf{regular GC manifold}. For example, if the GC sructure on $M$ is complex, then $\Delta=0$, whereas if the GC structure on $M$ is symplectic, then $\Delta=TM.$

\begin{Rem}
When $M$ represents a regular GC manifold, the pair $(T_\C M, \rho(L_-))$  constitutes a Lie algebroid pair, or simply a Lie pair.
\end{Rem}

Next, we will pay our attention to the local properties of GC manifolds. The initial insight into the local structure of regular GC manifolds is attributed to Gualtieri \cite{Gualtieri}. Subsequently, for a general GC manifold, the local structure was elaborated by Abouzaid-Boyarchenko \cite{Abouzaid-Boyarchenko}, and  was further  strengthened by Bailey \cite{Bailey}.

Gualtieri's generalized Darboux Theorem, presented in \cite{Gualtieri}, states the following: At a regular point $m$ of type $k$ on a GC manifold $(M,\mathcal{J})$, there exists an open neighborhood $U$ such that, up to a $B$-transform, $U$ is diffeomorphic to the product of an open subset $V$ of $\mathbb{C}^k$  and an open subset $W$ of $\mathbb{R}^{2n-2k}$. Specifically, there exists a closed $2$-form $B\in \Omega^2(V\times W)$ such that the generalized holomorphic homeomorphism
\begin{eqnarray*}\label{localtri}
\varphi: (U,\mathcal{J})\cong (V\times W,\exp({B})(\mathcal{J}_{J_0}\times \mathcal{J}_{\omega_0} )\exp({-B}))
\end{eqnarray*}
holds, where $J_0$ is the canonical complex structure on $\mathbb{C}^k$ and $\omega_0$ is the standard symplectic structure on $\mathbb{R}^{2n-2k}$.
Given a coordinate system $z=(z_1,\cdots,z_k)$   for $\mathbb{C}^k$  and a coordinate system $(p,q)=(p_1,\cdots,p_{n-k},q_1,\cdots,q_{n-k})$ for $\mathbb{R}^{2n-2k}$, we refer to the corresponding local coordinates
\begin{eqnarray}\label{canon}
(U,\varphi; z,p,q)=(U,\varphi; z_1,\cdots,z_k, p_1,\cdots,p_{n-k}, q_1,\cdots,q_{n-k})
\end{eqnarray} as the \emph{canonical coordinates}.

\begin{prop}\cite{DGA2023}\label{coordinates2}
Let $(M,\mathcal{J})$ be a regular GC manifold. Suppose that  $(U,\varphi;z,p,q)$ and $(U',\psi;z',p',q')$
are two canonical coordinates as in Equation \eqref{canon} and $U\cap U'\neq \varnothing$. Then the coordinate transformation $\phi:=\psi\circ \varphi^{-1}:\varphi(U\cap U')\to \psi(U\cap U')$ 
 satisfies that
\[\frac{\partial z'_\lambda}{\partial \bar{z}_\mu}=0,\qquad \frac{\partial z'_\lambda}{\partial p_\nu}
=\frac{\partial z'_\lambda}{\partial q_\nu}=0,\qquad\lambda,\mu=1,\cdots,k; \nu=1,\cdots,n-k.\]
\end{prop}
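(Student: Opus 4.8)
The plan is to deduce the identities from the coordinate-free characterization of generalized holomorphic functions as the kernel of $d_-$, combined with an explicit description of the anchor image $\rho(L_-)$ in a canonical chart. The guiding observation is that each new coordinate $z'_\lambda$ is a generalized holomorphic function; since generalized holomorphicity is intrinsic to $\mathcal{J}$, this single fact, read off in the old chart, produces exactly the three families of vanishing derivatives.

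First I would note that a $B$-transform $\exp(B)$ fixes the tangent component $X$ of $X+\xi$, so $\rho(\exp(B)L_-)=\rho(L_-)$; hence the anchor image is insensitive to the $B$-field in Gualtieri's local model. Working with the product $\mathcal{J}_{J_0}\times\mathcal{J}_{\omega_0}$, I would then compute $\rho(L_-)$ in canonical coordinates: on the complex factor $(\mathbb{C}^k,J_0)$ the anchor image of the $-i$-eigenbundle is spanned by the $\partial/\partial\bar z_\mu$, while on the symplectic factor $(\mathbb{R}^{2n-2k},\omega_0)$ the anchor restricts to an isomorphism onto the full complexified tangent space, spanned by the $\partial/\partial p_\nu$ and $\partial/\partial q_\nu$. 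Thus in any canonical chart $\rho(L_-)$ is spanned by $\partial/\partial\bar z_\mu$, $\partial/\partial p_\nu$, $\partial/\partial q_\nu$.

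Since the defining formula for $d_-$ gives $d_-f(u)=\rho(u)f$ on functions, a function $f$ lies in $\mathcal{O}_M=\ker d_-$ exactly when $\rho(L_-)f=0$; by the previous step this means $\partial f/\partial\bar z_\mu=\partial f/\partial p_\nu=\partial f/\partial q_\nu=0$, so the generalized holomorphic functions in a canonical chart are precisely the holomorphic functions of the $z$-variables. Applying this criterion in the chart $(U',\psi;z',p',q')$ shows that each $z'_\lambda$, being the $\lambda$-th holomorphic coordinate of the complex factor, satisfies $d_-z'_\lambda=0$. Finally, because $d_-z'_\lambda=0$ is a statement about $\mathcal{J}$ and does not depend on the trivialization, I would expand it in the old chart $(U,\varphi;z,p,q)$, where $\rho(L_-)$ is again spanned by $\partial/\partial\bar z_\mu$, $\partial/\partial p_\nu$, $\partial/\partial q_\nu$, and read off the asserted vanishing.

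The main obstacle is the explicit identification of $\rho(L_-)$, which hinges on fixing the eigenbundle conventions so that generalized holomorphic functions restrict to ordinary holomorphic functions on the complex factor; once this and the $B$-transform invariance of the anchor image are in hand, everything else is formal. A secondary point is to use carefully that \emph{generalized holomorphic} is a property of $\mathcal{J}$ rather than of a chosen trivialization, so that a function shown to lie in $\ker d_-$ via the $\psi$-model still satisfies $d_-z'_\lambda=0$ when differentiated in the $\varphi$-coordinates.
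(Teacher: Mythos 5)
The paper itself offers no proof of this proposition --- it is quoted from Lang--Jia--Liu \cite{DGA2023} with a citation only --- so there is nothing internal to compare against. Your argument is correct and is, in substance, the expected one: the whole statement reduces to the observation that each $z'_\lambda$ lies in $\mathcal{O}_M=\ker d_-$, combined with the identification of $\rho(L_-)$ in a canonical chart as the span of $\partial/\partial\bar z_\mu$, $\partial/\partial p_\nu$, $\partial/\partial q_\nu$ (using that $\exp(B)$ does not alter the tangent component, hence not the anchor image, and that the anchor is injective on the $\mathcal{J}_{\omega_0}$-factor). Note that you are effectively proving first the local characterization of GH functions that the paper states \emph{after} the proposition, and then deducing the proposition from it; this reversal is harmless because your derivation of $\rho(L_-)$ uses only Gualtieri's local normal form in a single chart and never the transition law being proved, so there is no circularity. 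The one point you rightly flag --- pinning down the sign convention so that $\rho(L_-)|_{\mathbb{C}^k}=T_{0,1}$ rather than $T_{1,0}$ --- is exactly where care is needed, and it is forced by the paper's convention that GH functions satisfy $\partial f/\partial\bar z_\lambda=0$.
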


Let $(M,\mathcal{J})$ be a regular GC manifold.
 A function $f:M\to \mathbb{C}$ is a generalized holomorphic function if and only if  for any $m\in M$ of type $k$, when expressed in the canonical coordinates $(z,p,q)$ as defined in Equation \eqref{canon}, the following conditions hold:
\[\frac{\partial f}{\partial \bar{z}_\lambda}=0,\qquad \frac{\partial f}{\partial p_\mu}=\frac{\partial f}{\partial q_\mu}=0,\qquad \lambda=1,\cdots,k; \mu=1,\cdots,n-k.\]

\section{Cohomology on generalized  holomorphic vector bundles}
We will recall the concept of generalized holomorphic vector bundles in a manner analogous to that of holomorphic vector bundles, utilizing local trivialization. While this definition is more rigorous than the one presented by Gualtieri in \cite{Gualtieri}, it possesses its own distinct advantages.
\begin{Def}\cite{DGA2023}\label{Definition3.1}
Suppose that  $M$ is a GC manifold. A complex vector bundle $\pi:E\to M$ is called a \textbf{generalized holomorphic vector bundle} (GH vector bundle, for short), if 
\begin{itemize}
\item[\rm{(1)}] $E$ is a GC manifold;
\item[\rm{(2)}] there is an open cover $\{U_i\}_{i\in I}$ of $M$ and a family of local trivializations of the vector bundle $E$ \[\{\varphi_i: E|_{U_i}=\pi^{-1}(U_i)\to U_i\times \mathbb{C}^r\}_{i\in I}\] satisfying that
$\varphi_i$ for each $i$ is a  GH homeomorphism, where $U_i\times \mathbb{C}^r$ is associated with the standard product GC structure.
\end{itemize}
\end{Def}

\begin{prop}\cite{DGA2023}\label{prop:Explicit}
Let $E$ be a complex vector bundle over $M$ with a family of local trivializations and the corresponding transition functions as follows:
\[\{\varphi_i:\pi^{-1}(U_i)\to U_i\times \mathbb{C}^{r}\},\qquad \varphi_{ij}=\varphi_i\circ \varphi_j^{-1}:U_i\cap U_j\to \mathrm{GL}(r,\mathbb{C}).\] Then $E$ is GH vector bundle over $M$ with local trivializations $\{\varphi_i\}$ 
 if and only if each entry $A_{\lambda\mu}: U_i\cap U_j\to \mathbb{C}$ of $\varphi_{ij}=(A_{\lambda\mu})_{r\times r}$  is a GH function.
\end{prop}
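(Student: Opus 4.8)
The plan is to first reduce the global statement to a local assertion about the transition maps, and then to settle that local assertion by a direct computation with the $-i$-eigenbundle in canonical coordinates.

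\emph{Reduction to a local statement.} Each trivialization $\varphi_i$ lets us pull back the standard product GC structure on $U_i\times\mathbb{C}^r$ to a GC structure $\mathcal{J}_i$ on $\pi^{-1}(U_i)$. On an overlap $\pi^{-1}(U_i\cap U_j)$ the two structures $\mathcal{J}_i$ and $\mathcal{J}_j$ coincide precisely when the transition map $\Phi_{ij}:=\varphi_i\circ\varphi_j^{-1}$, which is the bundle map $(x,v)\mapsto(x,\varphi_{ij}(x)v)$ over the identity of $U_i\cap U_j$, intertwines the product structure with itself, i.e.\ is a GH homeomorphism. If these local structures agree on all overlaps they glue to a global $\mathcal{J}_E$; being locally isomorphic through $\varphi_i$ to a genuine product GC structure, $\mathcal{J}_E$ automatically satisfies the three axioms of Definition~\ref{defgcg}, so $E$ is a GC manifold and each $\varphi_i$ is GH by construction, making $E$ a GH vector bundle. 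Conversely, if $E$ is GH with these trivializations, then $\varphi_{ij}=\varphi_i\circ\varphi_j^{-1}$ is a composite of GH homeomorphisms, hence GH. Thus the proposition reduces to the claim that the bundle automorphism $\Phi_A\colon(x,v)\mapsto(x,A(x)v)$ of $U\times\mathbb{C}^r$ is a GH homeomorphism if and only if every entry of $A=\varphi_{ij}$ is a GH function.

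\emph{Disposing of the $B$-field.} Working in canonical coordinates $(z,p,q)$ on $U$ and fibre coordinates $w$ on $\mathbb{C}^r$, the product structure is, by Gualtieri's generalized Darboux theorem, the image under $\exp(\widetilde B)$ of the standard product of the complex structure in $(z,w)$ and the symplectic structure in $(p,q)$, where $\widetilde B=\pi^*B$ is pulled back from the base. Because $\Phi_A$ covers the identity on $U$ one has $\Phi_A^*\widetilde B=\widetilde B$, and therefore the induced action of $\Phi_A$ on the complexified generalized tangent bundle commutes with $\exp(\widetilde B)$. Consequently $\Phi_A$ preserves the $B$-transformed eigenbundle $\exp(\widetilde B)L_-$ if and only if it preserves the standard $L_-$, so it suffices to test preservation of the standard product eigenbundle.

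\emph{The eigenbundle computation.} For the standard product, $L_-$ is spanned by $\partial/\partial z_\lambda$, $\partial/\partial w_\nu$, $d\bar z_\lambda$, $d\bar w_\nu$ together with the symplectic generators $\partial/\partial p_\mu+i\,dq_\mu$ and $\partial/\partial q_\mu-i\,dp_\mu$. The decisive observation is that none of these has a $\partial/\partial\bar w$-component, so an element of the generalized tangent bundle can lie in $L_-$ only if its $\partial/\partial\bar w$-part vanishes. Computing the images under $\Phi_A$ — the pushforward on the tangent part and the inverse transpose $(\Phi_A^{-1})^*$ on the cotangent part — the covector contributions land in $T^*_\mathbb{C}$ and cannot affect the $\partial/\partial\bar w$-component. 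A short calculation shows that the $\partial/\partial\bar w_\rho$-coefficients of $\Phi_A(\partial/\partial z_\lambda)$, of $\Phi_A(\partial/\partial p_\mu+i\,dq_\mu)$ and of $\Phi_A(\partial/\partial q_\mu-i\,dp_\mu)$ are of the form $\overline{\sum_\sigma(\partial A_{\rho\sigma}/\partial\bar z_\lambda)\,w_\sigma}$, $\overline{\sum_\sigma(\partial A_{\rho\sigma}/\partial p_\mu)\,w_\sigma}$ and $\overline{\sum_\sigma(\partial A_{\rho\sigma}/\partial q_\mu)\,w_\sigma}$, respectively. Their vanishing for all $w$ is exactly the system $\partial A/\partial\bar z_\lambda=\partial A/\partial p_\mu=\partial A/\partial q_\mu=0$, i.e.\ the condition that every $A_{\lambda\mu}$ be a GH function. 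This already yields the ``only if'' direction, which can alternatively be seen directly since a GH $\Phi_A$ must pull back the GH coordinate functions $w'_\rho$ to the functions $\sum_\mu A_{\rho\mu}w_\mu$, forcing each $A_{\rho\mu}\in\mathcal{O}_M$. For the ``if'' direction one checks that, when these conditions hold, the remaining generators $\partial/\partial w_\nu$, $d\bar z_\lambda$ and $d\bar w_\nu$ are also sent into $L_-$; since the entries of $A^{-1}$ are again GH (as $\mathcal{O}_M$ is closed under reciprocals of nonvanishing elements) and $\Phi_A^{-1}=\Phi_{A^{-1}}$, the inverse preserves $L_-$ as well, whence $\Phi_A\cdot L_-=L_-$ and $\Phi_A$ is GH.

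\emph{Main obstacle.} The atlas gluing of the first step is routine. The real work lies in the last step: correctly computing the inverse-transpose action on the cotangent generators and confirming that the sole obstruction to membership in $L_-$ is the $\partial/\partial\bar w$-component, together with the $B$-field commutation that licenses the passage to the standard product. Once these are in place, the three families of vanishing $\partial/\partial\bar w$-coefficients translate verbatim into the holomorphicity-plus-independence conditions characterizing GH functions.
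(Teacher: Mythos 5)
The paper itself gives no proof of Proposition~\ref{prop:Explicit}; it is quoted from \cite{DGA2023}. Judged on its own, your strategy --- reduce to the assertion that the fibrewise-linear automorphism $\Phi_A$ of $U\times\C^r$ is a GH homeomorphism iff the entries of $A$ lie in $\mathcal{O}_M$, then test preservation of the $-i$-eigenbundle on a spanning set --- is the natural one, and the computations you sketch do check out: the pushforward of a base generator acquires only $\partial/\partial w'$- and $\partial/\partial\bar w'$-components, the pullback of the covector generators stays in $T^*_\C$, and the resulting vanishing conditions are exactly $\partial A/\partial\bar z_\lambda=\partial A/\partial p_\mu=\partial A/\partial q_\mu=0$. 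One cosmetic slip: your list of generators of $L_-$ follows the convention opposite to this paper's, where $\rho(L_-)$ contains $\partial/\partial\bar z_\lambda$ (GH means $\partial f/\partial\bar z=0$) and $G^*M=L_-\cap T^*_\C M$ is spanned by the $dz_\lambda$; this is harmless, since a real diffeomorphism preserves $L_-$ iff it preserves $L_+=\overline{L_-}$, and the conditions on $A$ come out identically.

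The one genuine gap is that your eigenbundle computation is carried out in canonical Darboux coordinates, which exist only near regular points, whereas the proposition (like Definition~\ref{Definition3.1}) is stated for an arbitrary GC manifold $M$. As written, your argument establishes the local claim only on the regular locus; you would need either a separate density-and-continuity argument (regular points are open and dense, and both conditions are closed) or, better, a coordinate-free version of the same computation. The latter is available and in fact shorter: writing $L_-^{U\times\C^r}=L_-^U\oplus L_-^{\C^r}$ for the product structure, a generator $u+\xi\in L_-^U$ is sent by $\Phi_A$ to $u+\xi+\sum_{\rho,\sigma}\bigl(u(A_{\rho\sigma})w_\sigma\bigr)\partial/\partial w'_\rho+\sum_{\rho,\sigma}\bigl(u(\bar A_{\rho\sigma})\bar w_\sigma\bigr)\partial/\partial\bar w'_\rho$, and membership in $L_-$ forces precisely $u(A_{\rho\sigma})=0$ for all $u\in\rho(L_-^U)$, i.e.\ $d_-A_{\rho\sigma}=0$; the covector generators are handled by noting that $df=d_+f\in G^*U\subset L_-$ whenever $f$ is GH, applied to the entries of $A^{-1}$. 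This removes both the regularity hypothesis and the need to dispose of the $B$-field, since no normal form is invoked.
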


Next, we provide an example about GH vector bundles arising from a regular GC manifold.
Let $(M,\mathcal{J})$ be a regular GC manifold. Then $G^*M:=L_{-}\cap T^*_{\mathbb{C}} M$ is a GC vector bundle over $M$ \cite{DGA2023}, which is called the \textit{GH cotangent bundle} of $M$.

In complex geometry, a holomorphic vector bundle admits an  $\bar{\partial}_E$ operator. Such an operator also exists for a GH vector bundle. The following theorem formalizes this observation:
\begin{Thm}\cite{DGA2023}\label{Thm:NN}
If $E$ is a GH vector bundle over a GC manifold $(M,\mathcal{J})$, there exists an $L_-$-connection $\bar{\partial}_E$ on $E$ such that $\bar{\partial}_E^2=0$.
\end{Thm}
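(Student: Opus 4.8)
The plan is to construct the operator $\bar\partial_E$ locally using the GH trivializations guaranteed by Definition~\ref{Definition3.1} and then verify that these local operators patch into a global $L_-$-connection. First I would fix the family of local trivializations $\{\varphi_i\colon E|_{U_i}\to U_i\times\mathbb{C}^r\}$ from the definition. Over each $U_i$, the trivialization identifies sections of $E$ with $\mathbb{C}^r$-valued functions, and I would define $\bar\partial_E^{(i)}$ on a local section $s$ by the formula $\bar\partial_E^{(i)}s := \varphi_i^{-1}\bigl(d_-(\varphi_i(s))\bigr)$, where $d_-$ is the Lie algebroid differential of $L_-$ applied componentwise to the $\mathbb{C}^r$-valued function $\varphi_i(s)$. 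This is manifestly an $L_-$-connection on $E|_{U_i}$: it satisfies the Leibniz rule $\bar\partial_E^{(i)}(fs)=d_-(f)\otimes s+f\,\bar\partial_E^{(i)}s$ because $d_-$ does, and it squares to zero because $d_-^2=0$ (as $L_-$ is a Lie algebroid and $d_-$ is its Chevalley--Eilenberg differential).

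The second step is the patching argument, which I expect to be the crux. On an overlap $U_i\cap U_j$, the two local operators are related by the transition function $\varphi_{ij}\colon U_i\cap U_j\to \mathrm{GL}(r,\mathbb{C})$. Writing a section in the $j$-th frame and comparing, the discrepancy between $\bar\partial_E^{(i)}$ and $\bar\partial_E^{(j)}$ is governed by $d_-$ applied to the matrix entries of $\varphi_{ij}$. Here is where Proposition~\ref{prop:Explicit} does the essential work: it tells us that each entry $A_{\lambda\mu}$ of $\varphi_{ij}$ is a GH function, hence $d_-(A_{\lambda\mu})=0$. Therefore the correction term vanishes and $\bar\partial_E^{(i)}=\bar\partial_E^{(j)}$ on $U_i\cap U_j$. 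This compatibility allows the local operators to glue to a single globally defined operator $\bar\partial_E$ on $E$.

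Finally I would record that the glued operator inherits the two desired properties globally. The Leibniz rule is a local condition, so it holds on all of $M$ once it holds on each $U_i$; thus $\bar\partial_E$ is a genuine $L_-$-connection. Likewise the flatness $\bar\partial_E^2=0$ is checked locally, and on each $U_i$ it reduces to $d_-^2=0$ acting componentwise, so $\bar\partial_E^2=0$ globally. The main obstacle is making the overlap computation clean: one must carefully track how $d_-$ interacts with the matrix multiplication $s\mapsto \varphi_{ij}\cdot s$ and confirm that the only obstruction to agreement of the two frames is precisely the failure of the transition functions to be GH, which Proposition~\ref{prop:Explicit} rules out. Everything else is a routine transcription of the classical Dolbeault/$\bar\partial$ construction into the language of the Lie algebroid $L_-$.
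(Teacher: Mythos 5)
Your construction is correct and is the standard argument: the paper itself states this theorem without proof (citing Lang--Jia--Liu), and your local definition $\bar\partial_E^{(i)}s=\varphi_i^{-1}\bigl(d_-(\varphi_i(s))\bigr)$ together with the observation that Proposition~\ref{prop:Explicit} forces $d_-$ to annihilate the entries of $\varphi_{ij}$, so the local operators agree on overlaps, is exactly the expected proof. The Leibniz rule and $\bar\partial_E^2=0$ do reduce componentwise to the corresponding properties of the Chevalley--Eilenberg differential $d_-$, as you say.
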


A section $s$ of a GH vector bundle $E$ is termed GH if it satisfies condition $\overline{\partial}_E s=0$. Let  $\mathcal{O}(E,U)$  represent the  space of local GH sections of $E$ on the open subset $U$. Similarly, $\Gamma(E,U)$ denotes the space of local smooth sections of $E$ on $U$.

In other words,  this theorem implies that a GH vector bundle $E$ can be regarded as an $L_-$-module (for detailed definitions concerning Lie algebroid modules, refer to \cites{CMP16, DGA2023}). Furthermore, when $M$ is a regular GC manifold, $E$ acquires the structure of an $\rho(L_{-})$-module, with the action defined as follows:
\[\nabla_{\rho(l)} e:=\langle l, \overline{\partial}_E(e)\rangle,\qquad \forall l\in \Gamma(L_{-}), e\in \Gamma(E).\] 
This definition is well-defined because for any $\xi\in \Gamma(L_{-})\cap \Gamma(T^*_{\mathbb{C}} M)$, by definition, we have 
$\langle \xi,\overline{\partial}_E e\rangle =0$.


Denote by $\dce$ the Lie algebroid differential associated with the $\rho(L_{-})$-module structure on $E.$ This gives rise to a complex $(C^\bullet(\rho(L_{-}),E),\dce)$, where  $C^k(\rho(L_{-}),E)=\Gamma(\wedge^k \rho(L_{-})^*\otimes E)$ represents the space of $k$-cochains and the differential 
\[\dce: C^k(\rho(L_{-}),E)\longrightarrow C^{k+1}(\rho(L_{-}),E),\]
is explicitly defined by the following formula for $\omega\in C^k(\rho(L_{-}),E)$ and $a_i\in \Gamma(\rho(L_{-}))$:
\begin{eqnarray*}
\dce \omega(a_1,\cdots,a_{k+1})&=&\sum_{i=1}^{k+1}(-1)^{i+1} \nabla_{a_i} \omega(a_1,\cdots,\widehat{a_i},\cdots,a_{k+1})\\ && +\sum_{i< j} (-1)^{i+j}\omega([a_i,a_j],\cdots,\widehat{a_i},\cdots,\widehat{a_j},\cdots,a_{k+1}).
\end{eqnarray*}
The cohomology  of this complex is called the \emph{Lie algebroid cohomology}  of $\rho(L_{-})$ with coefficients in $E$, denoted by $\mathrm{H}^\bullet_{\rm CE} (\rho(L_{-}),E)$.

Next, we briefly review the notion of $\rm{\check{C}}$ech cohomology. Let $\mathcal{F}$ be a sheaf on a manifold $M$. 
Fix an open covering $\mathcal{U}=\{ U_i\}_{i\in \Lambda}$  with $\Lambda$ an ordered set. Denote by $U_{i_0\cdots i_k}:=U_{i_0}\cap \cdots \cap U_{i_k}$. Then set
\[C^k(\mathcal{U},\mathcal{F}):=\Pi_{i_0<\cdots < i_k} \Gamma(U_{i_0\cdots i_k},\mathcal{F}).\]
There is a natural differential 
\[\cech\colon C^k(\mathcal{U},\mathcal{F})\longrightarrow C^{k+1}(\mathcal{U},\mathcal{F}),\qquad \alpha= (\alpha_{i_0\cdots i_k})\mapsto \cech\alpha,\]
with \[(\cech\alpha)_{i_0\cdots i_{k+1}}=\sum_{j=0}^{k+1}(-1)^k\alpha_{i_0\cdots \hat{i_j}\cdots i_{k+1}}|_{U_{i_0\cdots i_{k+1}}}.\]
One obtains a complex $(C^\bullet(\mathcal{U},\mathcal{F}),\cech)$ by the fact that $\cech^2=0$. The cohomology  of this complex is called the \emph{$\rm{\check{C}}$ech cohomology} of the sheaf $\mathcal{F}$ with respect to the fixed covering $\mathcal{U}$.

Consider the sheaf of GH sections of $E$, then we have the $\rm \check{C}$ech cohomology of $E$ with respect to a covering $\mathcal{U}$, denoted by $\check{\mathrm{H}}^k(\mathcal{U},E)$. 
In the subsequent discussion, we aim to  establish  a fundamental relationship between these two
cohomology.

\begin{prop}\label{Thm:Cech-D}
For every GH vector bundle $E$   over a regular GC manifold $M$ and every locally finite covering $\mathcal{U}=\{U_i\}_{i\in\Lambda}$, there exists a natural morphism of vector spaces: 
$$\Pi_k \colon \check{\mathrm{H}}^k(\mathcal{U},E) \to \mathrm{H}^k_{\rm CE}(\rho(L_-), E),\quad k\geq 0.$$
\end{prop}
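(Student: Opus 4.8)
The plan is to exhibit both cohomologies as edge terms of a single first-quadrant double complex built from the cover $\mathcal{U}$ and the Chevalley--Eilenberg complex of the $\rho(L_-)$-module $E$, and then to define $\Pi_k$ by inverting the (always invertible) Lie-algebroid edge map and composing with the \v{C}ech edge map. For an open $U\subseteq M$ let $\underline{C}^q(U):=\Gamma\bigl(U,\wedge^q\rho(L_-)^*\otimes E\bigr)$. Regularity of $M$ guarantees that $\rho(L_-)$ is a vector bundle, so $\underline{C}^q$ is the sheaf of smooth sections of a vector bundle, and the Chevalley--Eilenberg operator $\dce$, being a first-order differential operator, restricts to opens and hence defines a sheaf morphism $\dce\colon\underline{C}^q\to\underline{C}^{q+1}$ with $\dce^2=0$ (Theorem~\ref{Thm:NN}). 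Form the double complex
$$K^{p,q}:=C^p(\mathcal{U},\underline{C}^q)=\prod_{i_0<\cdots<i_p}\Gamma\bigl(U_{i_0\cdots i_p},\wedge^q\rho(L_-)^*\otimes E\bigr),$$
with the \v{C}ech differential $\cech$ in the $p$-direction and $\dce$ in the $q$-direction. As $\cech$ is assembled purely from restriction maps and $\dce$ is a sheaf morphism, the two strictly commute; a sign twist $D=\cech+(-1)^p\dce$ then yields a total complex $(\operatorname{Tot}^\bullet,D)$ with $D^2=0$.

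First I would record the two edge inclusions. Vertically, the definition of the $\rho(L_-)$-action gives $\ker\bigl(\dce\colon\underline{C}^0\to\underline{C}^1\bigr)=\ker\bar{\partial}_E=\mathcal{O}(E)$, the sheaf of GH sections; since GH sections are $\dce$-closed, the evident inclusion $C^\bullet(\mathcal{U},\mathcal{O}(E))\hookrightarrow\operatorname{Tot}^\bullet$ into the row $q=0$ is a chain map and induces the \v{C}ech edge morphism $u\colon\check{\mathrm{H}}^k(\mathcal{U},E)\to\mathrm{H}^k(\operatorname{Tot})$. Horizontally, global sections $\Gamma(M,\underline{C}^q)=C^q(\rho(L_-),E)$ sit in $K^{0,q}$ as $\cech$-closed $0$-cochains, giving a chain map $C^\bullet(\rho(L_-),E)\hookrightarrow\operatorname{Tot}^\bullet$ and hence the Lie-algebroid edge morphism $v\colon\mathrm{H}^k_{\rm CE}(\rho(L_-),E)\to\mathrm{H}^k(\operatorname{Tot})$.

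The decisive step is that $v$ is an isomorphism. Each $\underline{C}^q$ is a sheaf of $C^\infty_M$-modules, hence fine; since $\mathcal{U}$ is locally finite, a subordinate partition of unity produces a contracting homotopy for $\cech$ in positive \v{C}ech degree, so $\check{\mathrm{H}}^p(\mathcal{U},\underline{C}^q)=0$ for $p>0$ while $\check{\mathrm{H}}^0(\mathcal{U},\underline{C}^q)=\Gamma(M,\underline{C}^q)$. Computing $\mathrm{H}^\bullet(\operatorname{Tot})$ by first taking $\cech$-cohomology therefore collapses everything onto the column $p=0$, which is exactly $(C^\bullet(\rho(L_-),E),\dce)$, and this identification is realised by $v$. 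I then set
$$\Pi_k:=v^{-1}\circ u\colon\ \check{\mathrm{H}}^k(\mathcal{U},E)\longrightarrow\mathrm{H}^k_{\rm CE}(\rho(L_-),E),$$
which is a well-defined morphism of vector spaces for every $k\geq0$; naturality in $E$ and under refinement of $\mathcal{U}$ is immediate from the functoriality of $K^{\bullet,\bullet}$.

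The main technical input is the column-acyclicity step, i.e.\ the fineness/partition-of-unity argument that forces $\check{\mathrm{H}}^{p}(\mathcal{U},\underline{C}^q)=0$ for $p>0$ and makes $v$ invertible --- this is precisely where local finiteness of $\mathcal{U}$ is used, and where one must check that the (smooth, not GH) partition of unity legitimately contracts $\cech$ on $\underline{C}^q$. I expect no obstruction in producing $u$: the only point to verify is that the transition data are GH (Proposition~\ref{prop:Explicit}), so that $\bar{\partial}_E$, and hence $\dce$, commutes with the restrictions defining $\cech$, making the row $q=0$ inclusion a genuine chain map. Note finally that the construction yields only a morphism rather than an isomorphism because the rows of $K^{\bullet,\bullet}$ are not claimed to be resolutions of $\mathcal{O}(E)$; upgrading $\Pi_k$ to an isomorphism would require a Lie-algebroid Poincar\'e lemma, which is not needed here.
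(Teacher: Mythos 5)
Your argument is correct, but it reaches the morphism by a genuinely different route than the paper. The paper writes down an explicit chain map $\phi\colon C^k(\mathcal{U},E)\to\Gamma(\wedge^k\rho(L_-)^*\otimes E)$ directly from a partition of unity, $\phi(\xi)=\sum_i h_i\sum_{c_1\cdots c_k}\dce h_{c_1}\wedge\cdots\wedge\dce h_{c_k}\otimes e_{ic_1\cdots c_k}$, and verifies $\phi\circ\cech=\dce\circ\phi$ by hand; you instead assemble the \v{C}ech--Chevalley--Eilenberg double complex, use fineness of the sheaves $\underline{C}^q$ to kill the positive \v{C}ech cohomology of each row, and define $\Pi_k=v^{-1}\circ u$ via the two edge morphisms. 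The two constructions agree: unwinding your zig-zag with the standard contracting homotopy $(K\alpha)_{i_0\cdots i_{p-1}}=\sum_j h_j\,\alpha_{ji_0\cdots i_{p-1}}$ reproduces the paper's formula for $\phi$ up to sign. Your version buys conceptual clarity --- independence of the partition of unity and naturality under refinement are automatic, whereas the paper only asserts the former ``by a straightforward argument'' --- and it isolates exactly why $\Pi_k$ is not an isomorphism (no Lie-algebroid Poincar\'e lemma), consistent with the paper's remark on the failure of Leray's theorem. What the paper's explicit formula buys, and what you would still need to extract from your double complex for the later sections, is the concrete cochain-level description of $\Pi_1$ and of its partial inverse (Proposition~\ref{Prop:Cech-D} and Remark~\ref{Rem:D-Cech}), which is used verbatim in the cocycle comparison of Theorem~\ref{Thm:Cech-curvature}. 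One small point worth making explicit in your write-up: the identification $\ker(\dce\colon\underline{C}^0\to\underline{C}^1)=\mathcal{O}(E)$ uses that $\rho\colon L_-\to\rho(L_-)$ is a surjective bundle map of constant rank, which is where regularity of $M$ enters.
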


\begin{proof}
Let $h_i\colon M\to \C, i\in\Lambda,$ be a partition of unity subordinate to the covering $\mathcal{U}=\{U_i\}_{i\in\Lambda}$. Given an element $\xi=(e_{c_0\cdots c_k})\in C^k(\mathcal{U},E),$ we define two mappings:
$$\phi_i(\xi)=\sum_{c_1\cdots c_k}\dce h_{c_1}\wedge\cdots \wedge\dce h_{c_k}\otimes e_{ic_1\cdots c_k}\in \Gamma(\Lambda^k \rho(L_-)^*\otimes E, U_i),$$
and
$$\phi (\xi)=\sum_{i\in\Lambda} h_i \phi_i(\xi) \in \Gamma(\Lambda^k \rho(L_-)^*\otimes E).$$
Since each $e_{c_0\cdots c_k}$ is a GH section, it follows directly from the properties of GH sections and the definition of the Lie algebroid differential that $\dce \phi_i(\xi)=0$. Consequently, we have
$$\dce \phi (\xi)=\sum_i \dce h_i\wedge \phi_i(\xi)=\sum_{c_0\cdots c_k}\dce h_{c_0}\wedge\cdots \wedge\dce h_{c_k}\otimes e_{c_0\cdots c_k}.$$

Now, we proceed to verify that $\phi$ is a morphism of complexes by performing a direct computation. We have
\begin{align*}
\phi(\cech \xi)
&=\sum_{i\in\Lambda} h_i\sum_{c_0\cdots c_k} \dce h_{c_0}\wedge\cdots\wedge\dce h_{c_k}\otimes (\cech\xi)_{ic_0\cdots c_k} \\
&=\sum_{i\in\Lambda} h_i\big(\dce\phi(\xi)+\sum_{j=0}^k (-1)^{j+1}\sum_{c_0\cdots c_k} \dce h_{c_0}\wedge\cdots\wedge \dce h_{c_k}\otimes\xi_{ic_0\cdots \hat{c_j}\cdots c_k} \big)\\
&=\sum_{i\in\Lambda} h_i\big(\dce\phi(\xi)\big)\\
&=\dce\phi(\xi).
\end{align*}
It follows that $\phi$ is a morphism of complexes:
\begin{equation*}
	\xymatrix@C=2.5pc@R=1.5pc{
		\cdots \ar[r]^{\cech}&C^k(\mathcal{U},E)\ar[r]\ar[d]^{\phi}\ar[r]^{\cech}
		&C^{k+1}(\mathcal{U},E)\ar[r]^{\cech}\ar[d]^{\phi}
		&\cdots\\		\cdots\ar[r]^{\dce \qquad}&\Gamma(\Lambda^k \rho(L_-)^*\otimes E)\ar[r]^{\dce}
		&\Gamma(\Lambda^{k+1} \rho(L_-)^*\otimes E)\ar[r]^{\qquad\dce}
		&\cdots.
	}
\end{equation*}
Finally, let $\Pi_\bullet$ denote the morphism induced by $\phi$ in cohomology.
\end{proof}

It can be shown through a straightforward  argument that $\Pi_k$ is independent of the choice of the partition of unity. Moreover, we have the following:
\begin{prop}\label{Prop:Cech-D}
Let $\mathcal{U}=\{ U_i\}_{i\in\Lambda}$ be a locally finite covering of $M.$ The morphism
$$\Pi_1\colon \check{\mathrm{H}}^1(\mathcal{U},E)\to \mathrm{H}^1_{\rm CE}(\rho(L_-), E),$$
defined in Proposition \ref{Thm:Cech-D}, is an injection.
\end{prop}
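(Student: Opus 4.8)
The plan is to prove injectivity of the induced map by producing, for any \v{C}ech $1$-cocycle whose image under $\phi$ is $\dce$-exact, an explicit GH primitive. Concretely, let $\xi=(e_{c_0c_1})\in C^1(\mathcal{U},E)$ satisfy $\cech\xi=0$ and suppose $\phi(\xi)=\dce\eta$ for some $\eta\in\Gamma(E)=C^0(\rho(L_-),E)$. I must exhibit a GH $0$-cochain $\zeta=(s_c)$ with $s_c\in\mathcal{O}(E,U_c)$ and $\cech\zeta=\xi$; this forces $[\xi]=0$ in $\check{\mathrm{H}}^1(\mathcal{U},E)$.

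First I would manufacture a merely smooth primitive from the same partition of unity $\{h_i\}$ that defines $\phi$. Set
\[
f_c:=\sum_{j\in\Lambda}h_j\,e_{jc}\in\Gamma(E,U_c),
\]
each summand extended by zero off $\operatorname{supp}(h_j)$; local finiteness of $\mathcal{U}$ makes $f_c$ a smooth section on $U_c$. The cocycle condition $\cech\xi=0$ gives $e_{c_0c_1}=e_{jc_1}-e_{jc_0}$, and since $\sum_jh_j=1$ a one-line computation yields $f_{c_1}-f_{c_0}=e_{c_0c_1}$ on $U_{c_0c_1}$, i.e.\ $\cech f=\xi$ at the level of smooth cochains. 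The obstruction to $f$ being holomorphic is recorded by $\dce f_c$; because each $e_{c_0c_1}$ is GH we have $\dce e_{c_0c_1}=0$, hence $\dce f_{c_1}-\dce f_{c_0}=\dce e_{c_0c_1}=0$ on overlaps, so the local cochains $\dce f_c$ glue to a single global cochain $\theta\in\Gamma(\wedge^1\rho(L_-)^*\otimes E)$ with $\theta|_{U_c}=\dce f_c$.

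The decisive step is to relate $\theta$ to $\phi(\xi)$. From $\dce f_c=\sum_j\dce h_j\otimes e_{jc}$ (Leibniz rule together with $\dce e_{jc}=0$) and $\theta=\sum_c h_c\,\theta|_{U_c}$, after relabeling the summation indices one may write $\theta=\sum_{i,c_1}h_{c_1}\,\dce h_i\otimes e_{ic_1}$, in the same index layout as $\phi(\xi)=\sum_{i,c_1}h_i\,\dce h_{c_1}\otimes e_{ic_1}$. Adding, the Leibniz rule recombines the two families of coefficients:
\[
\theta+\phi(\xi)=\sum_{i,c_1}\big(h_{c_1}\,\dce h_i+h_i\,\dce h_{c_1}\big)\otimes e_{ic_1}=\sum_{i,c_1}\dce(h_ih_{c_1})\otimes e_{ic_1}=\dce g,
\]
where $g:=\sum_{i,c_1}h_ih_{c_1}\,e_{ic_1}\in\Gamma(E)$ is global (indeed $g=0$ under the alternating convention $e_{ic_1}=-e_{c_1i}$ implicit in the unordered index sums defining $\phi_i$, giving simply $\theta=-\phi(\xi)$). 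Feeding in the hypothesis $\phi(\xi)=\dce\eta$ now yields $\theta=\dce\psi$ with $\psi:=g-\eta\in\Gamma(E)$ global. Finally I correct the primitive: put $s_c:=f_c-\psi|_{U_c}$. Since $\psi$ is global it is $\cech$-closed, so $\cech\zeta=\cech f=\xi$; and $\dce s_c=\dce f_c-\dce\psi=\theta|_{U_c}-\theta|_{U_c}=0$, whence $s_c\in\mathcal{O}(E,U_c)$. Thus $\xi=\cech\zeta$ is a coboundary and $\Pi_1$ is injective.

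The main obstacle is the bookkeeping in the identity $\theta+\phi(\xi)=\dce g$: one must track signs carefully and exploit the antisymmetry of the \v{C}ech cochains built into the unordered index sums of $\phi_i$, aligning the $e$-indices of $\theta$ and $\phi(\xi)$ so that the two coefficient families $h_i\,\dce h_{c_1}$ and $h_{c_1}\,\dce h_i$ assemble exactly into $\dce(h_ih_{c_1})$. Once this compatibility between the smooth primitive $f$ and the map $\phi$ is secured, the remaining points---smoothness and local finiteness of $f_c$, the gluing of the $\dce f_c$, and the global correction by $\psi$---are routine.
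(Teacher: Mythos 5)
Your argument is correct and is essentially the paper's own proof: both use the partition of unity to build the smooth primitive $0$-cochain $\sum_j h_j e_{jc}$, identify its $\dce$-image with $\pm\phi(\xi)$ (the paper does this by observing that the local sections $\sum_j \dce h_j\otimes e_{ij}$ already glue to $\dce e$, you do it via the identity $\theta+\phi(\xi)=\dce g$ with $g=0$), and then correct by the global primitive to land in $\mathcal{O}(E,U_c)$. The only differences are index-ordering conventions and this piece of bookkeeping, so the two proofs coincide in substance.
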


\begin{proof}
Let $h_i\colon M\to \C, i\in\Lambda,$ be a partition of unity subordinate to the covering $\mathcal{U}=\{U_i\}_{i\in\Lambda}$.
If $\Pi_1[(e_{ij})]=0$, there exists some $e\in\Gamma(E)$ such that $\sum_i h_i \sum_j \dce h_j\otimes e_{ij}=\dce e.$ Since $(\{U_{ij}, e_{ij} \})$ is closed, over $U_i\cap U_k$, we have
$$\sum_j \dce h_j\otimes e_{ij}-\sum_j \dce h_j\otimes e_{kj}=\sum_j \dce h_j\otimes e_{ik}=0. $$ Therefore all the $\sum_j \dce h_j\otimes e_{ij}$ come from the global section $\dce e\in\Gamma(\rho(L_-)^*\otimes E),$ i.e., on $U_i$, we have $\sum_j \dce h_j\otimes e_{ij}=\dce e|_{U_i}.$ Obviously, $\sum_j  h_j e_{ij}- e|_{U_i}\in \mathcal{O}(E, U_i).$ Moreover, we have $$(\sum_j  h_j e_{ij}- e|_{U_i})-(\sum_j  h_j e_{kj}- e|_{U_k})=e_{ik},$$
which shows that $\{U_{ij}, e_{ij} \}$ is exact. 
It follows that $\Pi_1$ is an injection.
\end{proof}

\begin{Rem}
In complex geometry, when the covering possesses favorable properties, Leray's theorem establishes an isomorphism between $\rm \check{C}$ech cohomology and Dolbeault cohomology. However, in the context of generalized complex manifolds, despite the covering maintaining these good properties, we cannot establish such an isomorphism. This is due to the failure of the Poincaré lemma.
\end{Rem}

\begin{Rem}\label{Rem:D-Cech}
Given $\omega \in\Gamma(\rho(L_-)^*\otimes E)$ such that $\dce \omega=0$, and assuming that the restriction $\omega|_{U_i}$ is $\dce$-exact on each open set $U_i$, our objective is to demonstrate the existence of an element $\xi\in \check{\mathrm{H}}^1(\mathcal{U},E)$ for which $\Pi_1(\xi)=[\omega]$, where $[\omega]$ denotes the cohomology class of $\omega$.

To this end, for each open set $U_i$, since $\omega|_{U_i}$ is $\dce$-exact, there exists a section $e_i\in\Gamma(E,U_i)$ such that $\dce e_i=\omega|_{U_i}$. Similarly, for another open set $U_j$, there exists a section $e_j\in\Gamma(E,U_j)$ satisfying $\dce e_j=\omega|_{U_j}$.  Whenever $U_i\cap U_j:=U_{ij}\neq \emptyset$, we observe that $e_{ij}=e_i-e_j$ defines a section in $\mathcal{O}(E,U_{ij})$.

We claim that the  1-cocycle $\Pi_1 (\{U_{ij}, e_i-e_j\})$ represents the cohomology class $[\omega]$. To verify this claim, we perform the following computation:
\begin{align*}
&\sum_i h_i \sum_j \dce h_j\otimes (e_i-e_j)\\
&\qquad=-\sum_j \dce h_j\otimes e_j\\
&\qquad=-\dce(\sum_j h_j e_j)+\sum_j h_j\otimes \dce e_j\\
&\qquad=-\dce(\sum_j h_j e_j)+\omega,
\end{align*}
where the first equality follows from the property $\sum_{i\in\Lambda}h_i=1$ of the partition of unity, the second equality is obtained by applying the Leibniz rule for the Lie algebroid differential $\dce$.
\end{Rem}

\section{The Atiyah class in $\rm \check{C}$ech  and Lie algebroid cohomology} 
We shall recall the notion  of GH connections on a GH vector bundle. The obstruction class for the existence of such a connection is  referred to as the Atiyah class of this GH vector bundle.
\begin{Def}
Let $E$ be a GH vector bundle over  a regular GC manifold $(M,\mathcal{J})$. A \textbf{GH connection} on $E$ is a $\mathbb{C}$-linear map (of sheaves) $D:E\to G^*M\otimes E$ such that
\[D(fs)=d_{+}f\otimes s+fD(s),\]
for all local GH functions $f$ on $M$ and all local GH sections $s$ of $E$, where $d_+$ is the Lie algebroid differential of the $+i$-eigenbundle  $L_+$ of $\mathcal{J}$ as in Equation \eqref{d+-}.
\end{Def}


With respect to the trivialization $\varphi_i: \pi^{-1}(U_i)\to U_i\times \mathbb{C}^r$ in Definition \ref{Definition3.1} on $E$, we can express a local GH connection on $U_i\times \mathbb{C}^r$ as $d_+ +A_i$ , where $A_i$ is a matrix-valued GH $1$-form on $U_i$. 
These local connections can be glued into a global connection on $E$ if and only if the following equality holds on $U_{ij}$:
\[\varphi_i^{-1}\circ (d_++A_i)\circ \varphi_i=\varphi_j^{-1}\circ (d_++A_j)\circ \varphi_j.\]
This is equivalent to
\begin{eqnarray*}
\varphi_j^{-1}\circ A_j\circ \varphi_j-\varphi_i^{-1}\circ A_i\circ \varphi_i&=&\varphi_j^{-1}\circ (\varphi_{ij}^{-1}\circ d_+\circ \varphi_{ij}-d_+)\circ \varphi_j\\ &=&\varphi_j^{-1}\circ (\varphi_{ij}^{-1}d_+ (\varphi_{ij}))\circ \varphi_j,
\end{eqnarray*}
where $\varphi_{ij}=\varphi_i\circ \varphi_j^{-1}$. Due to the relation $\varphi_{ij}\circ \varphi_{jk}\circ \varphi_{ki}=1$, the left-hand side of the above equation corresponds to a $1$-coboundary in the $\rm \check{C}$ech cohomology. This observation prompts us to define a class that quantifies the obstruction to constructing a GH connection on a GH vector bundle.

\begin{Def}\label{Def:Cech-Atiyah}
The \textbf{Atiyah class} $[\alpha_E]\in  \check{\mathrm{H}}^1(M,G^*M\otimes \End(E))$
of a GH vector bundle $E$ on a regular GC manifold $(M,\mathcal{J})$ is given by the $\rm \check{C}$ech cocycle 
\begin{equation*}
\alpha_E=\{U_{ij},\varphi_{j}^{-1}\circ (\varphi_{ij}^{-1}d_+ (\varphi_{ij}))\circ \varphi_j\},
\end{equation*}
where $d_+$ is the Lie algebroid differential of the $+i$-eigenbundle  $L_+$ of $\mathcal{J}$.
\end{Def}

In summary, we have the following:
\begin{Thm}\cite{DGA2023}
A  GH vector bundle $E$ over a GC manifold $M$ admits a GH connection if and only if  $[\alpha_E]\in \check{\mathrm{H}}^1(M,G^*M\otimes \End(E))$ vanishes.
\end{Thm}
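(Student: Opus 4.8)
The plan is to read off both implications from the affine structure of the space of GH connections together with the observation that $\alpha_E$ is, by construction, precisely the Čech representative of the obstruction to gluing local connections. First I would record that local GH connections always exist: in the trivialization $\varphi_i\colon\pi^{-1}(U_i)\to U_i\times\C^r$ of Definition~\ref{Definition3.1}, the operator $\varphi_i^{-1}\circ d_+\circ\varphi_i$ is a GH connection on $E|_{U_i}$, and every local GH connection has the form $d_++A_i$ for a GH $\End(\C^r)$-valued $1$-form $A_i$, since the difference of two GH connections is $\mathcal{O}_M$-linear and hence a GH section of $G^*M\otimes\End(E)$. The family $\{d_++A_i\}$ assembles into a global GH connection exactly when the gluing identity $\varphi_i^{-1}\circ(d_++A_i)\circ\varphi_i=\varphi_j^{-1}\circ(d_++A_j)\circ\varphi_j$ holds on each $U_{ij}$, which the computation preceding Definition~\ref{Def:Cech-Atiyah} rewrites as $B_j-B_i=\alpha_{E,ij}$, where $B_i:=\varphi_i^{-1}\circ A_i\circ\varphi_i$.

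For the forward implication, suppose $E$ carries a global GH connection $D$. Transporting $D$ through $\varphi_i$ over each $U_i$ expresses it as $d_++A_i$ with $A_i$ a GH $\End(\C^r)$-valued $1$-form, and the gluing identity above then reads $\alpha_{E,ij}=B_j-B_i$, exhibiting $\alpha_E$ as a Čech coboundary $\cech(B)$ (up to the sign convention of $\cech$) of the $0$-cochain $(B_i)\in C^0(\mathcal{U},G^*M\otimes\End(E))$; hence $[\alpha_E]=0$. Conversely, if $[\alpha_E]=0$ then there is a $0$-cochain $(B_i)$ of GH sections of $G^*M\otimes\End(E)$ with $\cech(B)=\alpha_E$, that is $B_j-B_i=\alpha_{E,ij}$ on each $U_{ij}$. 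Setting $A_i:=\varphi_i\circ B_i\circ\varphi_i^{-1}$, each $d_++A_i$ is a local GH connection, and $B_j-B_i=\alpha_{E,ij}$ is exactly the gluing identity, so the $d_++A_i$ patch together into a global GH connection on $E$. This establishes the equivalence.

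The gluing bookkeeping is the substance of both directions, and the only point requiring genuine care is that every object stays within the \emph{GH} category rather than merely the smooth one: the cochain $(B_i)$ witnessing $[\alpha_E]=0$ must consist of GH sections of $G^*M\otimes\End(E)$, and correspondingly the $A_i=\varphi_i\circ B_i\circ\varphi_i^{-1}$ must be GH $\End(\C^r)$-valued $1$-forms so that $d_++A_i$ is a bona fide GH connection. This is precisely the refinement distinguishing the Čech cohomology of the GH sheaf from its smooth analogue (compare Remark~\ref{Rem:D-Cech}), and it holds because $\varphi_i$, $\varphi_i^{-1}$ and $B_i$ are GH (using Proposition~\ref{prop:Explicit}) while conjugation and $d_+$ preserve GH-ness; the cocycle property of $\alpha_E$ and its membership in $\check{\mathrm{H}}^1(M,G^*M\otimes\End(E))$ are already recorded in Definition~\ref{Def:Cech-Atiyah}. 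Once GH-ness is tracked, both implications reduce to the tautological identification of ``coboundary of $\alpha_E$'' with ``gluing defect of the local connections'' carried out in the first paragraph.
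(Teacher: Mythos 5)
Your argument is correct and coincides with the paper's own justification: the theorem is stated as a summary of the computation immediately preceding Definition~\ref{Def:Cech-Atiyah}, which identifies the gluing defect of the local connections $d_++A_i$ with the cocycle $\alpha_E$, exactly as you do. Your added care that the $0$-cochain $(B_i)$ and the resulting $A_i$ remain GH (not merely smooth) is the right point to emphasize and is consistent with the paper's use of the sheaf of GH sections of $G^*M\otimes\End(E)$.
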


Next, we recall another way for defining the Atiyah class of a GH vector bundle. This approach stems from the definition of the Atiyah class for a Lie pair, which was introduced by Chen, Sti\'enon and Xu in their work \cite{CMP16}.

Consider the Lie pair $(T_{\mathbb{C}} M,\rho(L_{-}))$ and denote by $q\colon T_{\mathbb{C}} M\to T_{\mathbb{C}} M/\rho(L_{-})$ the natural projection.
Observe that \[(\rho(L_{-}))^\perp=L_{-}\cap T_{\mathbb{C}}^* M=G^*M=q^*(T_{\mathbb{C}} M/\rho(L_{-}))^*,\] 
where $(\rho(L_{-}))^\perp$ denotes the annihilator of $\rho(L_{-})$ of $T^*_{\mathbb{C}}M$.

Choose an $T_{\mathbb{C}} M$-connection $\nabla: \Gamma(E)\to \Gamma(T^*_{\mathbb{C}}M\otimes E)$ which extends the flat $\rho(L_-)$-connection  on $E$. The curvature of $\nabla$ induces a section $R^{\nabla}\in \Gamma\big(\rho(L_-)^*\otimes(\rho(L_{-}))^\perp\otimes \End(E)\big)$ or, equivalently, a bundle map $R^{\nabla}\colon \rho(L_{-})\otimes \big(T_{\mathbb{C}} M/\rho(L_{-})\big)\to \End(E)$  given by
\begin{equation}\label{Eqt：curvature}
R^{\nabla}\big(a,q(u)\big)={\nabla}_a\nabla_u -\nabla_u {\nabla}_a -\nabla_{[a,u]} ,\qquad \forall a\in \Gamma(\rho(L_-)),{u}\in \Gamma(T_{\mathbb{C}} M).
\end{equation}
This cohomology class \[[R^{\nabla}]\in {\mathrm{H}}^1_{\rm CE}(\rho(L_{-}),G^*M\otimes \End(E))\] is called the \textbf{Atiyah class} of the Lie pair $(T_{\mathbb{C}} M,\rho(L_{-}))$ with respect to the $\rho(L_{-})$-module $E$. 
It vanishes if and only if there is an $\rho(L_{-})$-compatible $T_{\mathbb{C}} M$-connection on $E$ (see \cite{CMP16, DGA2023} for more details).

\begin{Thm}\label{Thm:Cech-curvature}
The natural injection
$$\Pi_1\colon \check{\mathrm{H}}^1(\mathcal{U},G^*M\otimes \End(E)) \to \mathrm{H}^1_{\rm CE}(\rho(L_-), G^*M\otimes \End(E)) $$ maps the $\check{C}$ech cohomology class $[\alpha_E]$ to the Lie algebroid cohomology class $[R^\nabla]$. 
\end{Thm}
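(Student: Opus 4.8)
The plan is to represent the Lie pair Atiyah class $[R^\nabla]$ by the curvature of a $T_\C M$-connection assembled from local flat connections, and then to check that this curvature cocycle coincides with $\phi(\alpha_E)$; since $\Pi_1$ is induced by $\phi$, this yields $\Pi_1[\alpha_E]=[\phi(\alpha_E)]=[R^\nabla]$. On each chart $U_i$ I would transport the trivial connection $d=d_++d_-$ on $U_i\times\C^r$ back through the trivialization $\varphi_i$ to obtain a $T_\C M$-connection $\nabla^i$ on $E|_{U_i}$. Because $\overline{\partial}_E$ equals $d_-$ in this trivialization, $\nabla^i$ restricts on $\rho(L_-)$ to the flat $\rho(L_-)$-connection; and because $d$ is an honest flat connection, its mixed curvature vanishes, i.e. $R^{\nabla^i}=0$.

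Next I would glue these using the partition of unity $\{h_i\}$ subordinate to $\mathcal{U}$, setting $\nabla=\sum_i h_i\nabla^i$. Since $\sum_i h_i=1$ and every $\nabla^i$ agrees with the flat $\rho(L_-)$-connection along $\rho(L_-)$, the combination $\nabla$ is a global $T_\C M$-connection extending the flat $\rho(L_-)$-connection (the space of such extensions being affine over $\Gamma(G^*M\otimes\End(E))$). As $[R^\nabla]$ is independent of the chosen extension \cite{CMP16}, I may use this particular $\nabla$ to represent the Atiyah class of the Lie pair.

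It then remains to compute $R^\nabla$ and to match it with $\phi(\alpha_E)$. Expanding the defining curvature formula with $\nabla_a$ equal to the flat $\rho(L_-)$-connection for $a\in\Gamma(\rho(L_-))$, the Leibniz rule produces one term in which the derivatives $a(h_i)$ fall on the partition of unity and one term equal to $\sum_i h_i\,R^{\nabla^i}(a,q(u))$; the latter vanishes since $R^{\nabla^i}=0$, leaving $R^\nabla(a,q(u))=\sum_i a(h_i)\,\nabla^i_u$. I would then invoke the two facts that $\sum_i \dce h_i=0$ and that $\nabla^i-\nabla^j=\varphi_j^{-1}\circ(\varphi_{ij}^{-1}d_+(\varphi_{ij}))\circ\varphi_j$, the latter being the $(i,j)$-entry of $\alpha_E$ and a GH (hence $\dce$-closed) section of $G^*M\otimes\End(E)$. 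Inserting $1=\sum_i h_i$ and using $\sum_j a(h_j)=0$ rewrites $\sum_i a(h_i)\nabla^i_u$, up to the overall sign fixed by the conventions in Definition \ref{Def:Cech-Atiyah} and in $\cech$, as $\sum_{i,j} h_i\,a(h_j)\,(\nabla^i-\nabla^j)_u$, which is precisely $\phi(\alpha_E)$ evaluated at $(a,q(u))$. This identifies the two cocycles and completes the proof. (Alternatively, one checks that $R^\nabla|_{U_j}=\dce\big(\sum_i h_i(\nabla^i-\nabla^j)\big)$ is locally $\dce$-exact and applies Remark \ref{Rem:D-Cech} directly.)

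The main obstacle is the flatness claim $R^{\nabla^i}=0$: one must verify that combining the trivial GH connection $d_+$ with the flat $\rho(L_-)$-connection through the local trivialization genuinely produces the flat connection $d$ on $U_i\times\C^r$, with no residual mixed curvature. This rests on Proposition \ref{prop:Explicit} (the transition functions $\varphi_{ij}$ are GH, so $d_-\varphi_{ij}=0$ and $d\varphi_{ij}=d_+\varphi_{ij}$) together with the decomposition $d=d_++d_-$ of the de Rham differential into the two Lie algebroid differentials. A secondary, purely bookkeeping, difficulty is tracking the identification $\nabla^i-\nabla^j=(\alpha_E)_{ij}$ through the conjugations by $\varphi_i,\varphi_j$ and carrying the partition-of-unity manipulations built on $\sum_i \dce h_i=0$.
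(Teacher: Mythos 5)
Your argument is correct, and it reaches the conclusion by a genuinely different route than the paper. The paper takes an \emph{arbitrary} $T_\C M$-connection $\nabla$ extending the flat $\rho(L_-)$-connection, computes $R^\nabla$ in canonical coordinates through the Christoffel symbols $f^\mu_{k\alpha}$, observes that $R^\nabla|_{U_i}$ is $\dce$-exact with the explicit local primitive $\sum f^\mu_{k\alpha}\,dz_k\otimes e_\alpha^*\otimes e_\mu$, and then invokes Remark~\ref{Rem:D-Cech} to reduce the theorem to a transition-function identity between the difference of these primitives and the cocycle of Definition~\ref{Def:Cech-Atiyah}. You instead choose the \emph{specific} extension $\nabla=\sum_i h_i\,\varphi_i^{-1}\circ d\circ\varphi_i$; this makes the curvature computation one line ($R^{\nabla^i}=0$, so $R^\nabla(a,q(u))=\sum_i a(h_i)\nabla^i_u$), and the identification with $\phi(\alpha_E)$ follows from $\nabla^i-\nabla^j=(\alpha_E)_{ij}$ together with $\sum_i\dce h_i=0$ --- this is exactly the classical ``Dolbeault representative via partition of unity'' argument transplanted to the generalized setting. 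Both proofs ultimately rest on the same two facts (the transition functions are GH, so $d\varphi_{ij}=d_+\varphi_{ij}$, and $\Pi_1$ is computed by the partition-of-unity map $\phi$), but yours trades the coordinate computation for the independence of $[R^\nabla]$ from the chosen extension, which you correctly cite; your parenthetical alternative, $R^\nabla|_{U_j}=\dce\bigl(\sum_i h_i(\nabla^i-\nabla^j)\bigr)$ followed by Remark~\ref{Rem:D-Cech}, is in fact the cleanest bridge between the two proofs. Two small points: you should record explicitly that $\nabla^i-\nabla^j$ is $C^\infty$-linear, kills $\rho(L_-)$, and has GH coefficients, so that it genuinely is a section of $\mathcal{O}(G^*M\otimes\End(E),U_{ij})$; and the overall sign you flag (your computation naturally produces $-\sum_j\dce h_j\otimes\nabla^j$ versus $\phi(\alpha_E)=\sum_{i,j}h_i\,\dce h_j\otimes(\nabla^i-\nabla^j)$) is a harmless convention ambiguity that is present in the paper's own comparison of Equation~(4.4) with Remark~\ref{Rem:D-Cech} as well.
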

\begin{proof}
Suppose that  $(U_i;z,p,q)$ is a canonical coordinate on $U_i$ and $\varphi_i\colon E|_{U_i}\to U_i\times \C^r$ is a trivialization.  
Let $\nabla$ be an $T_\C M$-connection extending the $\rho(L_-)$-module structure on $E$ and $\{e_\alpha\}$  a basis of $\mathcal{O}(E,U_i).$
We define the Christoffel symbols by 
$$\nabla_{\frac{\partial}{\partial {z}_k}} e_\alpha:=\sum_\mu f_{k\alpha}^\mu e_\mu, \quad f_{k\alpha}^\mu\in C^\infty(U_i).$$
From Equation \eqref{Eqt：curvature}, we derive an explicit formula for the curvature:
$$R^\nabla|_{U_i}=\sum_{k,\alpha,\mu}(\dce f_{k\alpha}^\mu)\otimes dz_k\otimes e_\alpha^*\otimes e_\mu=\sum_{k,\alpha,\mu}\dce(f_{k\alpha}^\mu dz_k\otimes e_\alpha^*\otimes e_\mu),$$
where $\sum_{k,\alpha,\mu}f_{k\alpha}^\mu dz_k\otimes e_\alpha^*\otimes e_\mu\in\Gamma(G^*M\otimes \End(E), U_i ).$

Now, let $(U_j;z',p',q')$ be another canonical coordinate on $U_j$ and $\{t_\beta\}$ a basis of $\mathcal{O}(E,U_j).$ Assume that the Christoffel symbols in this coordinate system are given by 
$$\nabla_{\frac{\partial}{\partial {z'}_k}} t_\beta:=\sum_{\nu} g_{k\beta}^\nu t_\nu, \quad g_{k\beta}^\nu\in C^\infty(U_j).$$
Suppose that the local bases $\{ e_\alpha\}$ and $\{t_\beta\}$ of $\mathcal{O}(E,{U_{ij}})$ are related by the transition matrices
\begin{equation}\label{Eqt:transition}
e_\alpha=\sum_\beta t_\beta A_{\beta\alpha},\qquad  t_\beta=\sum_\alpha e_\alpha A^{\alpha\beta}
\end{equation}
where $(A_{\beta\alpha})^{-1}=(A^{\alpha\beta})$ and $A_{\beta\alpha}\in\mathcal{O}(U_{ij}).$ 
Using Equation \eqref{Eqt:transition}, we have
\begin{align*}
 \nabla_{\frac{\partial}{\partial {z}_k}} e_\alpha&= \sum_{\beta}\frac{\partial A_{\beta\alpha}}{\partial {z}_k} t_\beta+\sum_{\beta}
 A_{\beta\alpha} \nabla_{\frac{\partial}{\partial {z}_k}} t_\beta\\
 &=\sum_{\beta}\frac{\partial A_{\beta\alpha}}{\partial {z}_k} t_\beta+\sum_{\beta,l,\nu}
 A_{\beta\alpha}\frac{\partial {z'}_l}{\partial {z}_k}g_{l\beta}^\nu t_\nu\\
 &=\sum_{\beta,\mu}\frac{\partial A_{\beta\alpha}}{\partial {z}_k} 
 e_\mu A^{\mu \beta}+\sum_{\beta,l,\mu,\nu}
 A_{\beta\alpha}\frac{\partial {z'}_l}{\partial {z}_k}g_{l\beta}^\nu e_\mu A^{\mu \nu}.
\end{align*}
From this, we identify the Christoffel symbols in the $\{e_\alpha\}$ basis as:
\begin{equation}\label{Eqt:Christoffel-symbol}
f_{k\alpha}^\mu=\sum_{\beta}\frac{\partial A_{\beta\alpha}}{\partial {z}_k} 
 A^{\mu \beta}+\sum_{\beta,\nu,l}
 A_{\beta\alpha}\frac{\partial {z'}_l}{\partial {z}_k}g_{l\beta}^\nu  A^{\mu \nu}.
 \end{equation}

According to Proposition \ref{Prop:Cech-D} and Remark \ref{Rem:D-Cech}, it suffices to show that 
\begin{equation}\label{Eqt:curvature-cech}
\sum_{k,\beta,\nu} g_{k\beta}^\nu dz'_k\otimes t_\beta^*\otimes t_\nu-\sum_{k,\alpha,\mu} f_{k\alpha}^\mu dz_k\otimes e_\alpha^*\otimes e_\mu  =\varphi_{j}^{-1}\circ (\varphi_{ij}^{-1}d_+ (\varphi_{ij}))\circ \varphi_j\in \mathcal{O}( G^*M\otimes \End(E), U_{ij} ).    
\end{equation}

Regard the expression $\varphi^{-1}_j\circ (\varphi^{-1}_{ij}d_+(\varphi_{ij}))\circ\varphi_j$ as a homomorphism of local GH section spaces from $\mathcal{O}(E,{U_{ij}})$ to $\mathcal{O}(G^*M\otimes E,{U_{ij}})$, then for any $\sum_\beta h_\beta t_\beta\in\mathcal{O}(E,{U_{ij}})$ we have
\begin{align*}
&\qquad\varphi^{-1}_j\circ (\varphi^{-1}_{ij}d_+(\varphi_{ij}))\circ\varphi_j(\sum_\beta h_\beta t_\beta)\\
&=\varphi^{-1}_j\circ(\varphi^{-1}_{ij}\circ d_+\circ \varphi_{ij}-d_+)(\sum_\beta h_\beta \varphi_j(t_\beta))\\
&=\varphi^{-1}_i\circ d_+\circ\varphi_i(\sum_\beta h_\beta t_\beta)-\sum_\beta(d_+ h_\beta)\otimes t_\beta\\
&=\varphi^{-1}_i\circ d_+(\sum_{\alpha,\beta} h_\beta A^{\alpha\beta}\varphi_i(e_\alpha))-\sum_\beta(d_+ h_\beta)\otimes t_\beta\\
&=\sum_{\alpha,\beta} A^{\alpha\beta} (d_+ h_\beta)\otimes e_\alpha+\sum_{\alpha,\beta} h_\beta (d_+A^{\alpha\beta})\otimes e_\alpha-\sum_\beta(d_+ h_\beta)\otimes t_\beta\\
&=\sum_{\alpha,\beta,k}h_\beta A_{k \alpha}(d_+ A^{\alpha \beta}) \otimes t_k.
\end{align*}
Here $d_+(\varphi_{ij}):=d_+\circ \varphi_{ij}-\varphi_{ij}\circ d_+$.

On the other hand, using Equation \eqref{Eqt:Christoffel-symbol},  we have
\begin{align*}
&\quad \sum_{k,\beta,\nu}(g_{k\beta}^\nu dz'_k\otimes t_\beta^*\otimes t_\nu-\sum_{k,\alpha,\mu} f_{k\alpha}^\mu dz_k\otimes e_\alpha^*\otimes e_\mu  )(\sum_\beta h_\beta t_\beta)\\
&=\sum_{k,\beta,\nu}
g_{k\beta}^\nu h_\beta dz'_k\otimes  t_\nu-\sum_{k,\alpha,\mu,\lambda}f_{k\alpha}^\mu h_\lambda A^{\alpha\lambda} dz_k\otimes e_\mu \\
&=\sum_{k,\beta,\nu}
g_{k\beta}^\nu h_\beta dz'_k\otimes  t_\nu-\sum_{k,\alpha,\mu,\lambda}(\sum_{\beta}\frac{\partial A_{\beta\alpha}}{\partial {z}_k} 
 A^{\mu \beta}+\sum_{\beta,l,\nu}
 A_{\beta\alpha}\frac{\partial {z'}_l}{\partial {z}_k}g_{l\beta}^\nu  A^{\mu \nu}) h_\lambda A^{\alpha\lambda} dz_k\otimes e_\mu\\
&=-\sum_{k,\alpha,\beta,\mu,\lambda}\frac{\partial A_{\beta\alpha}}{\partial {z}_k} 
 A^{\mu \beta} h_\lambda A^{\alpha\lambda} dz_k\otimes e_\mu\\
&=\sum_{\alpha,\beta,\lambda} A_{\beta\alpha}h_\lambda d_+(A^{\alpha\lambda})\otimes t_\beta.
\end{align*}
Therefore, Equation \eqref{Eqt:curvature-cech}
holds. 
\end{proof}

In conclusion, we find that $[R^\nabla]$ vanishes if and only if $E$ possesses a GH connection.


\section{Correspondence of  extension classes and $\rm{\check{C}}$ech classes}
In this section, we demonstrate that the Atiyah classes, which are defined in terms of extension classes and $\rm{\check{C}}$ech classes, are identical, as evidenced by the isomorphism between their respective underlying groups.

Let us first recall the construction of the jet bundle $\mathfrak{J}^1 E$ of a GH vector bundle $E$ over a regular GC manifold $M$. 
Denote by $\mathcal{O}_m(E)$ the space of local GH sections around $m\in M$. Two local sections $e, t\in \mathcal{O}_m(E)$ are said to be equivalent if 
$$e(m)=t(m),\qquad e_{*,m}=t_{*,m}.$$
We denote the equivalence class of $e$ at $m$ as $[e]_m$, which is called the (first) jet of $e$ at $m$. Define
$$\mathfrak{J}^1 E=\{[e]_m| m\in M, e\in  \mathcal{O}_m(E) \},$$
which is a GH vector bundle and fits into a short exact sequence of GH vector bundles 
\begin{equation}\label{seq:GHVB-jet}
 0\longrightarrow  G^*M\otimes E\stackrel{I} \longrightarrow \mathfrak{J}^1 E\stackrel{\pi^1}\longrightarrow E\longrightarrow 0,
\end{equation}
where $G^*M$ is the GH cotangent bundle of $M$ \cite{DGA2023}. 
Now we recall the third definition of the Atiyah class of a GH vector bundle, which is characterized as the extension class associated with the jet bundle.
\begin{Def}\label{Def:Atiyah-jet}
The Atiyah class of $E$ is defined to be the first extension class 
\begin{equation*}\label{Def:Atiyahclass-jet}
A(E)\in \Ext^1_{\mathcal{O}_M}(E,G^*M\otimes E)
\end{equation*}
of the short exact sequence \eqref{seq:GHVB-jet}, where $\Ext^1_{\mathcal{O}_M}(E,G^*M\otimes E)$ denotes the group of isomorphic classes of extensions  of  $G^*M\otimes E$ by $E$ in the category of GH vector bundles.
\end{Def}

In general, there is no canonical choice of splitting for the short exact sequence \eqref{seq:GHVB-jet}. However, the induced short exact sequence 
\begin{equation*}\label{seq:GHVB-jet-section}
 0\longrightarrow  \mathcal{O}(G^*M\otimes E)\stackrel{I} \longrightarrow \mathcal{O}(\mathfrak{J}^1 E)\stackrel{\pi^1}\longrightarrow \mathcal{O}(E)\longrightarrow 0,
\end{equation*}
at the level of spaces of GH sections splits canonically: if $e$ is a GH section of $E$, then $\sigma(e):=[e]$ is a GH section of $\mathfrak{J}^1 E$ such that $\pi^1(\sigma(e))=e$.  Note that the splitting $\sigma$ is not $\mathcal{O}_M$-linear. In fact, in a local coordinate system $(U,\varphi;z,p,q)$ of $M$, we have
\begin{equation*}
\sigma(fe)=f\sigma(e)+\sum_{\lambda} \frac{\partial f}{\partial z_\lambda}I(dz_\lambda\otimes e),
\end{equation*}
where $f$ is a local GH function on $U$ and $e\in\mathcal{O}(E,U)$. Hence, a basis $\{e_\alpha\}$   of $\mathcal{O}(E,U)$ gives rise to a basis $\{\sigma(e_\alpha), I(dz_\lambda\otimes e_\alpha)\}$ of
$\mathcal{O}(\mathfrak{J}^1 E,U)$. Let $V$ be another open subset of $M$ with coordinate $(z',p',q')$ satisfying $U\cap V\neq \emptyset$. Assume that  $\{t_\beta\}$ is a basis of $\mathcal{O}(E,V)$ and $e_\alpha=\sum_{\beta}A_{\beta\alpha}t_\beta$ on $U\cap V$, then the transition functions of $\mathfrak{J}^1 E$ are given by
\begin{equation}\label{Eqt:JE-transition-sigma0}
I(dz_\lambda\otimes e_\alpha)=\sum_{\beta,\mu} A_{\beta\alpha}\frac{\partial z_\lambda}{\lambda z'_\mu}I(dz'_\mu\otimes t_\beta),
\end{equation}
and 
\begin{equation}\label{Eqt:JE-transition-sigma}
\sigma(e_\alpha)=\sum_{\beta} A_{\beta\alpha}\sigma(t_\beta)+\sum_{\beta,\mu} \frac{\partial A_{\beta\alpha}}{\lambda z'_\mu}I(dz'_\mu\otimes t_\beta).
\end{equation}

In generalized geometry, we now establish an Extension-to-$\rm{\check{C}}$ech correspondence analogous to that found in complex geometry, formalized as follows:
\begin{Thm}\label{Thm:EXT-Cech}
Let $E_1$ and $E_2$ be two GH vector bundles over a GC manifold $M$.
There exists a natural isomorphism
$$\Phi\colon \Ext^1_{\mathcal{O}_M}(E_2,E_1)\to\check{\mathrm{H}}^1(\mathcal{U}, E_2^*\otimes E_1),$$
where $\mathcal{U}$ is an open cover of $M$.
\end{Thm}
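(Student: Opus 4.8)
The plan is to adapt the classical sheaf-theoretic classification of extensions, working throughout with sheaves of GH sections. Represent a class in $\Ext^1_{\mathcal{O}_M}(E_2,E_1)$ by a short exact sequence of GH vector bundles
\[0\longrightarrow E_1\stackrel{\iota}\longrightarrow X\stackrel{p}\longrightarrow E_2\longrightarrow 0.\]
Taking $\mathcal{U}=\{U_i\}$ to trivialize $E_1,X,E_2$ simultaneously and to be fine enough that $p$ admits GH local splittings, I would choose on each $U_i$ a GH bundle map $s_i\colon E_2|_{U_i}\to X|_{U_i}$ with $p\circ s_i=\id$, obtained by lifting a GH frame of $E_2$ through the sheaf surjection $\mathcal{O}(X)\to\mathcal{O}(E_2)$. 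Over an overlap $U_{ij}$ the difference $s_i-s_j$ lands in $\ker p=\iota(E_1)$, so $\theta_{ij}:=\iota^{-1}\circ(s_i-s_j)$ is a GH morphism, i.e. $\theta_{ij}\in\mathcal{O}(E_2^*\otimes E_1,U_{ij})$. A short computation gives $\theta_{ij}-\theta_{ik}+\theta_{jk}=0$ on triple overlaps, so $\{U_{ij},\theta_{ij}\}$ is a $\rm{\check{C}}$ech $1$-cocycle, and I define $\Phi$ to send the class of the extension to $[\{\theta_{ij}\}]\in\check{\mathrm{H}}^1(\mathcal{U},E_2^*\otimes E_1)$.

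I would then verify that $\Phi$ is well defined and bijective. Changing $s_i$ to another GH splitting alters it by $\iota\circ\eta_i$ with $\eta_i\in\mathcal{O}(E_2^*\otimes E_1,U_i)$, which changes the cocycle by the coboundary $\eta_i-\eta_j$; an isomorphism of extensions likewise yields cohomologous cocycles, so $\Phi$ descends to isomorphism classes. To invert $\Phi$ I would reglue the trivial extension: given a cocycle $\{\theta_{ij}\}$, form the GH bundle automorphisms $\Theta_{ij}(a,b)=(a+\theta_{ij}(b),b)$ of $(E_1\oplus E_2)|_{U_{ij}}$ and glue the restrictions of $E_1\oplus E_2$ along them; the $\rm{\check{C}}$ech cocycle condition is exactly the compatibility $\Theta_{ik}=\Theta_{ij}\circ\Theta_{jk}$ required for a consistent gluing. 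In a local frame this presents $X$ with block upper-triangular transition matrices
\[\begin{pmatrix} g^{E_1}_{ij} & \ast \\ 0 & g^{E_2}_{ij} \end{pmatrix},\]
all of whose entries are GH, so by Proposition~\ref{prop:Explicit} the glued object is a genuine GH vector bundle sitting in a short exact sequence $0\to E_1\to X\to E_2\to 0$; its tautological local splittings $b\mapsto(0,b)$ return the cocycle $\{\theta_{ij}\}$, proving surjectivity. Injectivity follows because $\Phi=0$ forces the $s_i$ to be correctable to a global GH splitting, so the sequence splits.

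Finally I would check that $\Phi$ is an isomorphism of abelian groups: expressing the splittings of a Baer sum of two extensions in terms of the individual splittings shows that $\Phi$ carries the Baer sum to the sum of the representing cocycles, and naturality in $E_1$ and $E_2$ is immediate from the construction. I expect the genuine difficulty to lie entirely in the existence of the GH local splittings $s_i$ over the members of $\mathcal{U}$. Unlike in the smooth category, a GH lift of a frame need not exist over an arbitrary open set---the obstruction is a class in $\mathrm{H}^1(U_i,\mathcal{O}(E_1))$---so one must work with a cover fine enough to kill these local obstructions, exactly the kind of trivializing cover already used to define the $\rm{\check{C}}$ech Atiyah class in Definition~\ref{Def:Cech-Atiyah}. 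Pinning down this hypothesis on $\mathcal{U}$, and confirming that the resulting correspondence is compatible with refinement, is where the real work of the proof resides.
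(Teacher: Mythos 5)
Your proposal is correct and follows essentially the same route as the paper: your cocycle $\theta_{ij}=\iota^{-1}\circ(s_i-s_j)$ coincides (up to sign) with the paper's $\xi_j\circ\eta_i$ via the identity $\eta_i-\eta_j=\iota\circ\xi_j\circ\eta_i$, and your surjectivity argument by regluing $E_1\oplus E_2$ with block upper-triangular GH transition matrices, as well as the well-definedness and injectivity checks, match the paper's. The local-splittability issue you flag is handled in the paper simply by choosing the cover $\mathcal{U}$ fine enough that the extension splits over each $U_i$ (which is achievable after refining any GH-trivializing cover), exactly as you propose.
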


\begin{proof}
Consider a short exact sequence representing an extension of $E_2$ by $E_1$:
\begin{equation*}\label{seq:AtiyahDBESplit}
	\xymatrix@C=2pc@R=2pc{
		0\ar[r]&E_1\ar[r]^{\iota}
		&E\ar[r]^{P}
		&E_2\ar[r]
		&0.
	}
\end{equation*}
By the local splittability of such extensions, there exists an open cover $\mathcal{U}=\{U_i \}_{i\in \Lambda}$ of $M$ such that the sequence splits over each $U_i.$ This means that there exist a short exact sequnce
\begin{equation*}\label{seq:AtiyahDBESplit}
	\xymatrix@C=2pc@R=2pc{
		0&E_1|_{U_i}\ar[l]
		&E|_{U_i}\ar[l]_{\xi_i}
		&E_2|_{U_i}\ar[l]_{\eta_i}
		&0,\ar[l] 
	}
\end{equation*}
satisfying $P|_{U_i}\circ \eta_i=\id_{E_2|_{U_i}}$, $\xi_i\circ \iota|_{U_i}=\id_{E_1|_{U_i}}$ and $\iota|_{U_i}\circ\xi_i+\eta_i\circ P|_{U_i}=\id_{E|_{U_i}}.$

On non-empty intersection $U_{ij}=U_i \cap U_j$, the two sets of splitting maps $(\xi_i,\eta_i)$ and $(\xi_j,\eta_j)$ satisfy a key compatibility relation:
$$\xi_i\circ\eta_j=-\xi_j\circ \eta_i\colon E_2|_{U_{ij}}\to E_1|_{U_{ij}}.$$
This defines a 1-cochain $T=(\xi_i\circ\eta_j)$  in the $\rm{\check{C}}$ech space $C^1(\mathcal{U},E_2^*\otimes E_1)$ associated to the locally free sheaf $E_2^*\otimes E_1$. To verify $T$ is a cocycle, note that the splitting maps satisfy $\eta_i-\eta_j=\iota\circ \xi_j\circ\eta_i$, and a direct computation shows the $\rm{\check{C}}$ech differential vanishes:
$$(dT)_{ijk}=\xi_j\circ\eta_k-\xi_i\circ\eta_k+\xi_i\circ\eta_j=0.$$ Hence we define the map $\Phi$
 by assigning to each extension class its corresponding cocycle class:
$$\Phi\colon\Ext^1_{\mathcal{O}_M}(E_2,E_1)\to \check{\mathrm{H}}^1(\mathcal{U}, E_2^*\otimes E_1),\qquad [E]\to [(\xi_i\circ\eta_j)] .$$

First we check that $\Phi$ is well-defined. It suffices to prove that equivalent extensions produce the same cohomology class.
Let $\theta\colon E\to E'$ be an isomorphism of GH vector bundles between two extensions:
\begin{equation*}
	\xymatrix@C=2pc@R=1pc{
		0\ar[r]&E_1\ar@{=}[d]\ar[r]^{\iota}
		& E\ar[r]^{P}\ar[d]^{\theta}
		&E_2\ar[r]\ar@{=}[d]
		&0\\
		0\ar[r]&E_1\ar[r]^{\iota'}
		&E'\ar[r]^{P'}
		&E_2\ar[r]
		&0.
	}
\end{equation*}
Let $(\xi'_i,\eta'_i)$ denote GH splitting maps for $E'$ over $U_i$. 
Then the isomorphism $\theta$ determines a 0-cochain in the $\rm{\check{C}}$ech complex:
$$h=(h_i:=\xi'_i\circ\theta \circ\eta_i-\xi_i \circ\theta^{-1} \circ\eta'_i)\in C^0(\mathcal{U},E_2^*\otimes E_1).$$
On the intersection $U_{ij}$, we have $\xi_i\circ\eta_j-\xi'_i\circ\eta'_j =(dh)_{ij}$. This implies that the two cocycles belong to the same cohomology class. 
Moreover, an analogous argument shows that the cohomology class $[(\xi_i\circ\eta_j)]$ is independent of the specific choice of local splittings $(\xi_i,\eta_i)$ for the original extension $E.$ Together, these observations ensure that $\Phi$ is well-defined.

Next, we verify the injectivity of 
$\Phi$. Suppose 
$E$ and $E'$ are extensions such that their cocycles satisfy
\begin{equation}\label{Eqt:hcocycle}
\xi_i\circ\eta_j-\xi'_i\circ\eta'_j =(dh)_{ij},
\end{equation}
for some 0-cochain $h=(h_i)\in C^0(\mathcal{U},E_2^*\otimes E_1)$. We construct an isomorphism between 
$E$ and $E'$. Locally on $U_i$, define a morphism of GH vector bundles:
$$\theta_i:=\iota'\circ h_i\circ P+\eta'_i\circ P+\iota'\circ\xi_i.$$
One readily verifies that $\theta_i$ is invertible, with inverse given by
$$\iota\circ\xi'_i+\eta_i\circ P'-\iota\circ h_i\circ P'.$$
By the cocycle condition \eqref{Eqt:hcocycle}, we find $\theta_i=\theta_j$ on $U_{ij}$, so 
$\{\theta_i\}$ glues to a global isomorphism 
$E\to E'$. Thus $\Phi$ is injective.

Finally, we show that $\phi$ is surjective. Let $(T_{\alpha \beta})\in C^1(\mathcal{U},E_2^*\otimes E_1)$ be a $\rm{\check{C}}$ech cocycle. Denote by $\varphi^{E_1}_{\alpha \beta}$ and $\varphi^{E_2}_{\alpha \beta}$ the transition matrices of the GH vector bundles $E_1$ and $E_2$  with respect to $\mathcal{U}$. Consider the smooth vector bundle $E_1\oplus E_2$; by Proposition 3.2. in \cite{DGA2023}, the transition matrices
$$\psi_{\alpha \beta}:=
\begin{pmatrix}
\varphi^{E_1}_{\alpha \beta} &  \varphi^{E_1}_{\beta}\circ T_{\alpha \beta}\circ (\varphi^{E_2}_{\alpha})^{-1}\\
0 & \varphi^{E_2}_{\alpha \beta}
\end{pmatrix}
$$
define a GH structure on $E_1\oplus E_2$. This yields a short exact sequence
\begin{equation*}
\xymatrix@C=2pc@R=2pc{
0\ar[r]&E_1\ar[r]^{\iota}
&E_1 \oplus E_2\ar[r]^{P}
&E_2\ar[r]
&0,
}
\end{equation*}
(where $\iota(e)=(e,0)$ and $P(e_1,e_2)=e_2$) whose image under $\Phi$ is precisely the cohomology class of $(T_{\alpha \beta})$. Thus, $\Phi$ is surjective.

In summary, $\Phi$ is an isomorphism.
\end{proof}

Let $E$ be a GH vector bundle over a regular GC manifold $M$. We are now ready to state the main result in this section.

\begin{Thm}\label{Thm:2main}
Given an open covering $\mathcal{U}=\{U_i\}_{i\in\Lambda}$ of $M$, the natural isomorphism
$$\Phi\colon\Ext^1_{\mathcal{O}_M}(E,G^*M\otimes E)\to \check{\mathrm{H}}^1(\mathcal{U}, G^*M\otimes \End (E))$$
maps the extension class $A(E)$ of the short exact sequence \eqref{seq:GHVB-jet} to the $\rm{\check{C}}$ech class $[\alpha_E]$.
\end{Thm}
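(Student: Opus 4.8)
The plan is to compute the image $\Phi(A(E))$ explicitly using the canonical local splittings of the jet sequence \eqref{seq:GHVB-jet} and compare the resulting \v{C}ech cocycle with $\alpha_E$. First I would fix an open covering $\mathcal{U}=\{U_i\}_{i\in\Lambda}$ subordinate to canonical coordinate charts, so that on each $U_i$ we have coordinates $(z,p,q)$ and a basis $\{e_\alpha^{(i)}\}$ of $\mathcal{O}(E,U_i)$. The key observation is that the section-level splitting $\sigma$ described before Theorem~\ref{Thm:2main} provides a natural choice of local splitting maps $(\xi_i,\eta_i)$ for the jet sequence over $U_i$: the map $\eta_i$ sends $e$ to $\sigma(e)$, and $\xi_i$ is the complementary projection $\mathfrak{J}^1E\to G^*M\otimes E$ determined by $\sigma$. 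With these splittings in hand, $\Phi(A(E))$ is represented by the cocycle $(\xi_i\circ\eta_j)$, and the whole computation reduces to evaluating $\xi_i\circ\eta_j$ on a local GH section.

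The heart of the argument is the computation of $\xi_i\circ\sigma_j$ on the intersection $U_{ij}$. Here I would apply the transition formula \eqref{Eqt:JE-transition-sigma}: writing $e_\alpha=\sum_\beta A_{\beta\alpha}t_\beta$ for the transition between the bases over $U_i$ and $U_j$, the section $\sigma_j(t_\beta)$ expressed in the $U_i$-frame picks up exactly the defect term $\sum_{\mu}\frac{\partial A_{\gamma\beta}}{\partial z'_\mu}I(dz'_\mu\otimes e_\gamma)$ (after converting via the inverse transition matrix $A^{\alpha\beta}$). Since $\xi_i$ annihilates the $\sigma_i$-image and projects onto the $G^*M\otimes E$ summand through $I$, applying $\xi_i$ kills the leading $\sigma_i$-term and retains precisely this derivative-of-transition-matrix term. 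The outcome should be, up to the usual identifications $G^*M=q^*(T_\C M/\rho(L_-))^*$ and the pairing $\langle\cdot,\cdot\rangle$, an expression of the form $\varphi_j^{-1}\circ(\varphi_{ij}^{-1}d_+(\varphi_{ij}))\circ\varphi_j$, which is exactly the cocycle $\alpha_E$ of Definition~\ref{Def:Cech-Atiyah}. This is essentially the same $d_+$-of-transition-matrix calculation that already appeared in the proof of Theorem~\ref{Thm:Cech-curvature}, now repackaged through the jet-bundle splitting rather than through Christoffel symbols.

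I expect the main obstacle to be bookkeeping: correctly matching the abstract splitting maps $(\xi_i,\eta_i)$ from the proof of Theorem~\ref{Thm:EXT-Cech} with the concrete jet-bundle data, and verifying that the derivative $\partial A_{\beta\alpha}/\partial z_k$ assembling across charts genuinely reproduces $d_+(\varphi_{ij})=d_+\circ\varphi_{ij}-\varphi_{ij}\circ d_+$ rather than some conjugate or sign variant. Proposition~\ref{coordinates2} is what guarantees this works: it ensures $\partial z'_\lambda/\partial\bar z_\mu=\partial z'_\lambda/\partial p_\nu=\partial z'_\lambda/\partial q_\nu=0$, so that the only surviving derivatives are the holomorphic ones appearing in $d_+$, and the $G^*M$-valued differential $d_+$ is well-defined on the transition functions $A_{\beta\alpha}\in\mathcal{O}(U_{ij})$. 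Once this identification is pinned down, the conclusion $\Phi(A(E))=[\alpha_E]$ follows immediately from Theorem~\ref{Thm:EXT-Cech}, since $\Phi$ sends the extension class to the class of $(\xi_i\circ\eta_j)$ and we have shown this cocycle equals $\alpha_E$.
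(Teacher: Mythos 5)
Your proposal is correct and follows essentially the same route as the paper: choose the local splittings $(\xi_i,\eta_i)$ induced by the canonical section-level splitting $\sigma$, evaluate $\xi_i\circ\eta_j$ via the transition formula \eqref{Eqt:JE-transition-sigma} so that only the $\partial A^{\alpha\beta}/\partial z_\lambda$ term survives, and match the result with the computation of $\varphi_j^{-1}\circ(\varphi_{ij}^{-1}d_+(\varphi_{ij}))\circ\varphi_j$ already carried out in the proof of Theorem~\ref{Thm:Cech-curvature}. The bookkeeping you flag (matching indices and verifying the surviving derivatives are exactly the $d_+$ ones) is precisely what the paper's proof carries out explicitly, and it goes through as you expect.
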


\begin{proof}
Let $\varphi_i\colon E|_{U_i}\to U_i\times \C^r$ be a local trivialization of $E$ and $\{e_\alpha\}$  the basis of $\mathcal{O}(E,U_i)$  induced by $\varphi_i$. Let
$(z,p,q)$ denote a canonical coordinate system on $U_i$. We introduce a local splitting of the sequence \eqref{seq:GHVB-jet} over each $U_i$:
\begin{equation*}\label{seq:localSplit}
	\xymatrix@C=2pc@R=2pc{
		0&G^*M\otimes E|_{U_i}\ar[l]
		&\mathfrak{J}^1E|_{U_i}\ar[l]_{~~~\xi_i}
		&E|_{U_i}\ar[l]_{\eta_i}
		&0,\ar[l] 
	}
\end{equation*}
where the splitting maps $\xi_i$ and $\eta_i$ are defined as follows: 
$$\eta_i(\sum_\alpha f_\alpha e_\alpha)=\sum_\alpha f_\alpha\sigma( e_\alpha),\quad f_\alpha\in\mathcal{O}_M(U_i),$$
and $$\xi_i\big(\sum_\alpha f_\alpha\sigma( e_\alpha)+\sum_{\alpha,\lambda}f_{\lambda\alpha} I(dz_\lambda\otimes e_\alpha)\big)=\sum_{\alpha,\lambda}f_{\lambda\alpha} dz_\lambda\otimes e_\alpha , \quad f_\alpha, f_{\lambda\alpha}\in\mathcal{O}_M(U_i).$$ 

On another open set $U_j$ with canonical coordinates $(z',p',q')$, let $\varphi_j\colon E|_{U_j}\to U_j\times \C^r$ be a local trivialization, and $\{t_\beta\}$ the basis of  $\mathcal{O}(E,U_j)$ induced by $\varphi_j$. Suppose that the local bases $\{ e_\alpha\}$ and $\{t_\beta\}$ of $\mathcal{O}(E,{U_{ij}})$ are related by the transition matrices
\begin{equation}\label{Eqt:transition2}
e_\alpha=\sum_\beta t_\beta A_{\beta\alpha},\qquad  t_\beta=\sum_\alpha e_\alpha A^{\alpha\beta}
\end{equation}
where $(A^{\alpha\beta})$ is the  inverse matrix of $(A_{\beta\alpha})$ and $A_{\beta\alpha}\in\mathcal{O}(U_{ij}).$ 


Recall from Definition \ref{Def:Cech-Atiyah} that the $\rm{\check{C}}$ech cocycle representing the Atiyah class 
is given by:
$$\alpha_E=\{U_{ij}, \varphi^{-1}_j\circ (\varphi^{-1}_{ij}d_+(\varphi_{ij}))\circ\varphi_j\}.$$
By Theorem \ref{Thm:EXT-Cech}, the map 
$\Phi$ assigns to the extension class 
associated to $\mathfrak{J}^1E$ the $\rm{\check{C}}$ech cocycle $\{U_{ij}, \xi_i\circ \eta_j \}$. We now verify that these cocycles coincide:
$$\xi_i\circ \eta_j=\varphi^{-1}_j\circ (\varphi^{-1}_{ij}d_+(\varphi_{ij}))\circ\varphi_j.$$

Regard $\varphi^{-1}_j\circ (\varphi^{-1}_{ij}d_+(\varphi_{ij}))\circ\varphi_j$ as a homomorphism of local GH section
spaces from $\mathcal{O}(E,{U_{ij}})$ to $\mathcal{O}(G^*M\otimes E,{U_{ij}})$. 
By Theorem \ref{Thm:Cech-curvature}, for any $\sum_\beta h_\beta t_\beta\in\mathcal{O}(E,{U_{ij}})$ we have
$$\varphi^{-1}_j\circ (\varphi^{-1}_{ij}d_+(\varphi_{ij}))\circ\varphi_j(\sum_\beta h_\beta t_\beta)=\sum_{\alpha,\beta,k}h_\beta A_{k \alpha}(d_+ A^{\alpha \beta}) \otimes t_k.$$ 
On the other hand, using Equations \eqref{Eqt:JE-transition-sigma} and \eqref{Eqt:transition2}, we have 
\begin{align*}
 \xi_i\circ \eta_j(\sum_\beta h_\beta t_\beta)
&=\xi_i\big(\sum_\beta h_\beta\sigma( t_\beta)\big)\\
&=\xi_i\big(\sum_{\alpha,\beta} h_\beta A^{\alpha\beta}\sigma(e_\alpha)+\sum_{\alpha,\beta,\lambda}h_\beta\frac{\partial A^{\alpha\beta}}{\partial z_\lambda} I(dz_\lambda\otimes e_\alpha)\big)\\
&=\sum_{\alpha,\beta,\lambda}h_\beta\frac{\partial A^{\alpha\beta}}{\partial z_\lambda} dz_\lambda\otimes e_\alpha\\
&=\sum_{\alpha,\beta,k}h_\beta A_{k \alpha}(d_+ A^{\alpha \beta}) \otimes t_k.
\end{align*}
Thus $\xi_i\circ \eta_j=\varphi^{-1}_j\circ (\varphi^{-1}_{ij}d_+(\varphi_{ij}))\circ\varphi_j$ on $U_{ij}$, so  the natural isomorphism $\Phi$ maps the extension class $A(E)$ to the $\rm{\check{C}}$ech class $[\alpha_E]$.
\end{proof}

Applying Theorem \ref{Thm:2main}, we recover a result of Lang-Jia-Liu \cite{DGA2023}:
\begin{Cor}
Let $E$ be a GH vector bundle over a regular GC manifold $M$. Then $E$ admits a GH connection if and only if the short exact sequence \eqref{seq:GHVB-jet} splits.
\end{Cor}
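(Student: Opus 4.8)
The plan is to combine the two main theorems of this section with the structural fact that the Atiyah class is the obstruction to a GH connection. First I would invoke Theorem~\ref{Thm:2main}, which asserts that the natural isomorphism $\Phi$ sends the extension class $A(E)$ of the jet sequence \eqref{seq:GHVB-jet} to the \v{C}ech class $[\alpha_E]$. Since $\Phi$ is an isomorphism of vector spaces, it follows that $A(E)=0$ in $\Ext^1_{\mathcal{O}_M}(E,G^*M\otimes E)$ if and only if $[\alpha_E]=0$ in $\check{\mathrm{H}}^1(\mathcal{U},G^*M\otimes \End(E))$.

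Next I would translate each vanishing condition into its geometric meaning. On the \v{C}ech side, the earlier theorem (attributed to \cite{DGA2023}) states that $E$ admits a GH connection precisely when $[\alpha_E]$ vanishes; this is exactly the content built into Definition~\ref{Def:Cech-Atiyah}, where $\alpha_E$ was constructed as the obstruction to gluing the local connections $d_++A_i$ into a global one. On the extension side, a standard fact in homological algebra is that the first extension class of a short exact sequence vanishes if and only if the sequence splits: $A(E)=0$ in $\Ext^1_{\mathcal{O}_M}(E,G^*M\otimes E)$ is equivalent to the splittability of \eqref{seq:GHVB-jet} in the category of GH vector bundles. I would state this equivalence directly, as it is the defining property of the extension class.

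Chaining these equivalences yields the corollary: $E$ admits a GH connection $\iff [\alpha_E]=0 \iff A(E)=0 \iff$ the sequence \eqref{seq:GHVB-jet} splits. I do not anticipate a genuine obstacle here, since all the substantive work has already been carried out in Theorems~\ref{Thm:Cech-curvature}, \ref{Thm:EXT-Cech}, and \ref{Thm:2main}; the corollary is a formal consequence. The only point deserving care is to make explicit that the splitting of \eqref{seq:GHVB-jet} is equivalent to the existence of an $\mathcal{O}_M$-linear section of $\pi^1\colon \mathfrak{J}^1E\to E$, and that such a section is precisely the datum of a GH connection on $E$ — indeed, a splitting $s\colon E\to \mathfrak{J}^1E$ composed with the description of $\sigma$ in local coordinates recovers an operator satisfying the Leibniz rule $D(fe)=d_+f\otimes e+fD(e)$. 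Making this identification transparent confirms that the abstract vanishing of $A(E)$ corresponds exactly to the geometric existence statement, completing the proof.
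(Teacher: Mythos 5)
Your proposal is correct and follows exactly the route the paper intends: the corollary is stated as an immediate consequence of Theorem~\ref{Thm:2main} (the isomorphism $\Phi$ carrying $A(E)$ to $[\alpha_E]$), combined with the fact that $[\alpha_E]=0$ characterizes the existence of a GH connection and that $A(E)=0$ characterizes the splitting of \eqref{seq:GHVB-jet}. The paper offers no further argument, so your chain of equivalences (and the optional explicit identification of a splitting with a GH connection) matches and slightly elaborates the paper's own reasoning.
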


\section{From extension classes to Lie algebroid cohomology classes}
In this section, we demonstrate that the Atiyah classes, defined through the extension class of the first jet bundle $\mathfrak{J}^1E$ and the cohomology class of the Lie pair $(T_{\mathbb{C}} M,\rho(L_{-}))$, are canonically identical. To establish this correspondence, we employ the concept of the jet bundle associated with a Lie pair, which was introduced by Chen, Stiénon and Xu in \cite{CMP16}.
\subsection{The jet bundle arising from a Lie pair}\label{Sec:jetbundle}

We start by recalling the jet bundle arising from a Lie pair. To be best of our knowledge, this construction first appeared explicitly in \cite{CMP16}.
\begin{Def}\label{Def:jetLiepair} Let  $(L,A)$ be a complex Lie pair over $M$ and $E$ an $A$-module.
	An $L$-jet (of order $1$) on $E$ extending the $A$-action $\nabla$ is a pair $(D_x,e_x)$ consisting of a $\C$-linear map $D_x\colon \bsection{{E^*}}\to  L^* _x$ and a point $e_x$ in the fiber of $E$ over $x\in M$, satisfying
	\begin{eqnarray*}
		\label{Eqt:Ljet1}
		\langle D_x(\varepsilon),a_x \rangle&=&
		\langle \nabla_{a_x}\varepsilon,e_x \rangle  ,\\
		\label{Eqt:Ljet2}
		D_x(f\varepsilon) &=&f(x)\cdot D_x(\varepsilon)+\langle \varepsilon_x,e_x \rangle\cdot\rho_{_L}^*(df),
	\end{eqnarray*}
	for all $a_x\in A_x$, $\varepsilon\in\bsection{{E^*}}$, and $f\in C^\infty(M,\C)$.
\end{Def}
The jet bundle $\mathscr{J}^1_{L/A}E$ is the manifold whose points are $L$-jets on $E$. It is a vector bundle over $M$: the projection $\mathscr{J}^1_{L/A}E\to M$ maps $(D_x,e_x)$ to $x$. 
A global section of $\mathscr{J}^1_{L/A}(E)$ is of the form $(D,e)$ consisting of   an $\C$-linear map $D:~\Gamma({E^*})\to \Gamma( L^* )$ and a section $e\in \bsection{E} $ satisfying 
\begin{eqnarray*}
	\langle D(\varepsilon), a\rangle&=& \langle \nabla_a \varepsilon,e\rangle ,\\
	D(f\varepsilon) &=& fD(\varepsilon)+\langle \varepsilon,e\rangle \cdot \rho_{_L}^*(df),
\end{eqnarray*}
for all $f\in C^\infty(M, \C)$, $\varepsilon\in\Gamma({E^*})$, and $a\in\Gamma(A)$.
Moreover, it fits into a short exact sequence of vector bundles over $M$:
\begin{equation}\label{Dia:jetbundle}
	0\longrightarrow  {A^\perp}\otimes E\stackrel{\kappa} \longrightarrow \mathscr{J}^1_{L/A}(E)\stackrel{\gamma}\longrightarrow E\longrightarrow 0,
\end{equation}
where $A^\perp$ denotes the annihilator of $A$ of $L^*.$

Let $B$ denote the quotient bundle $L/A$ and $q\colon L\to B$ the natural projection.
A splitting $j\colon B \to L$ naturally determines a splitting $\zeta\colon \Gamma(E)\to \Gamma(\mathscr{J}^1_{L/A}(E))$ of the short exact sequence of spaces of smooth sections 
\begin{equation*}
	0\longrightarrow  \Gamma({A^\perp}\otimes E)\stackrel{\kappa} \longrightarrow \Gamma(\mathscr{J}^1_{L/A}(E))\stackrel{\gamma}\longrightarrow \Gamma(E)\longrightarrow 0,
\end{equation*}
induced by the short exact sequence \eqref{Dia:jetbundle}. For every $e\in\Gamma(E)$, denote by $\zeta(e):=(D^{\zeta(e)}, e)$. The $\C$-linear map $D^{\zeta(e)}$ determined by the section $\zeta(e)$ satisfies
\begin{eqnarray*}
	\langle D^{\zeta(e)}(\varepsilon), a\rangle&=& \langle \nabla_a \varepsilon,e\rangle ,\\
	\langle D^{\zeta(e)}(\varepsilon), j(b)\rangle &=& \rho(j(b))\langle \varepsilon,e\rangle ,
\end{eqnarray*}
for all $e\in\Gamma(E)$, $\varepsilon\in\Gamma(E^*)$, $a\in\Gamma(A)$
and $b\in\Gamma(B)$.

The proof of the following lemma is a tedious computation, which we omit.
\begin{lem}\label{Lem:zeta-Liepair}
The splitting $\zeta\colon \Gamma(E)\to \Gamma(\mathscr{J}^1_{L/A}(E))$ is not $C^\infty(M, \C)$-linear. For every $e\in\Gamma(E)$ and $f\in C^\infty(M, \C)$, we have 
\begin{equation*}
\zeta(fe)=f\zeta(e)+\kappa\big((q^*j^*\rho_L^*(df))\otimes e\big).
\end{equation*}
\end{lem}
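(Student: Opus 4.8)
The plan is to prove the non-linearity of $\zeta$ by directly comparing the jets $\zeta(fe)$ and $f\zeta(e)$ using their defining properties as sections of $\mathscr{J}^1_{L/A}(E)$. Recall that a section of the jet bundle is a pair $(D,e)$, and that the splitting $\zeta$ is characterized by the two conditions $\langle D^{\zeta(e)}(\varepsilon),a\rangle = \langle \nabla_a\varepsilon, e\rangle$ for $a\in\Gamma(A)$, and $\langle D^{\zeta(e)}(\varepsilon),j(b)\rangle = \rho(j(b))\langle\varepsilon,e\rangle$ for $b\in\Gamma(B)$. Since $L = A \oplus j(B)$ as vector bundles (via the chosen splitting $j$), a pairing with an arbitrary element of $L$ is determined by its values against $A$ and against $j(B)$. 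Hence to identify the section $\zeta(fe)$, I would compute both $\langle D^{\zeta(fe)}(\varepsilon),a\rangle$ and $\langle D^{\zeta(fe)}(\varepsilon),j(b)\rangle$ and match them against the corresponding pairings for the candidate expression $f\zeta(e)+\kappa\big((q^*j^*\rho_L^*(df))\otimes e\big)$.

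First I would handle the $A$-direction. The element $\kappa\big((q^*j^*\rho_L^*(df))\otimes e\big)$ lives in $A^\perp\otimes E$, so when paired against $a\in\Gamma(A)$ its $L^*$-component $q^*j^*\rho_L^*(df)$ annihilates $A$ — indeed $q^*j^*\rho_L^*(df)$ factors through $B=L/A$ by construction, so it vanishes on $A$. Thus the $A$-pairing of the right-hand side reduces to $f\langle D^{\zeta(e)}(\varepsilon),a\rangle = f\langle\nabla_a\varepsilon,e\rangle = \langle\nabla_a\varepsilon,fe\rangle$, which agrees with $\langle D^{\zeta(fe)}(\varepsilon),a\rangle$. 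This confirms the two sides have the same underlying $A$-action and the same base section $fe$.

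The substantive check is the $j(B)$-direction, which is where the correction term appears. Here I would compute $\langle D^{\zeta(fe)}(\varepsilon),j(b)\rangle = \rho(j(b))\langle\varepsilon,fe\rangle = \rho(j(b))\big(f\langle\varepsilon,e\rangle\big)$. Applying the Leibniz rule for the anchor-derivative $\rho(j(b))$ acting on the product of functions $f$ and $\langle\varepsilon,e\rangle$ gives $\big(\rho(j(b))f\big)\langle\varepsilon,e\rangle + f\,\rho(j(b))\langle\varepsilon,e\rangle$. The second summand is exactly $f\langle D^{\zeta(e)}(\varepsilon),j(b)\rangle$, matching the $f\zeta(e)$ contribution. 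The first summand $\big(\rho(j(b))f\big)\langle\varepsilon,e\rangle = \langle df,\rho_L(j(b))\rangle\langle\varepsilon,e\rangle$ is precisely the pairing of the correction term $\kappa\big((q^*j^*\rho_L^*(df))\otimes e\big)$ against $j(b)$, since $\langle q^*j^*\rho_L^*(df), j(b)\rangle = \langle \rho_L^*(df), j(b)\rangle = \langle df, \rho_L(j(b))\rangle$ by the definitions of $q$, $j$, and the pullback.

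The main obstacle is purely bookkeeping: one must track carefully how the composite pullback $q^*j^*\rho_L^*$ acts when paired against $j(b)$ versus against $a$, and confirm that it genuinely annihilates $A$ while reproducing the anchor derivative along $j(B)$. There is no conceptual difficulty — the identity is forced by the Leibniz rule for $\rho(j(b))$ together with the defining relations of $\zeta$ — which is exactly why the authors describe the computation as tedious and omit it. Once both the $A$-pairings and the $j(B)$-pairings of $\zeta(fe)$ and $f\zeta(e)+\kappa\big((q^*j^*\rho_L^*(df))\otimes e\big)$ are shown to coincide, the two sections of $\mathscr{J}^1_{L/A}(E)$ are equal, establishing the claimed formula.
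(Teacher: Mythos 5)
Your proof is correct, and it is precisely the ``tedious computation'' that the paper declares it omits: decomposing $L=A\oplus j(B)$ via the splitting, matching the $A$-pairings (where the correction term vanishes because $q^{*}$ annihilates $A$) and the $j(B)$-pairings (where the Leibniz rule for $\rho_{L}(j(b))$ produces exactly $\langle\varepsilon,e\rangle\,\rho_{L}(j(b))f=\langle\varepsilon,e\rangle\langle q^{*}j^{*}\rho_{L}^{*}(df),j(b)\rangle$) is the natural and complete argument. Since the paper gives no proof of this lemma, your write-up fills that gap correctly and there is nothing to compare it against.
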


The jet bundle $\mathscr{J}^1_{L/A}(E)$ can be naturally endowed with an $A$-action.

\begin{prop}\cite{CMP16}
	\begin{itemize}
		\item[(1)] 
		The jet   vector bundle $\mathscr{J}^1_{L/A}(E)$ is a representation over $A$ --- The covariant derivative 
		\[\Gamma(A)\times\Gamma\big(\mathscr{J}^1_{L/A}(E)\big)\to\Gamma\big(\mathscr{J}^1_{L/A}(E)\big)\]
		is given by 
		$$\nabla_a(D,e):=(\nabla_a D,\nabla_ae),$$
		where the $\K$-linear map $\nabla_a D\colon \bsection{{E^*}}\to\bsection{ L^* }$ is defined as follows:
		$$\langle (\nabla_a D)(\varepsilon), l \rangle
		:=\rho_{_A}(a)\langle D(\varepsilon),l \rangle
		-\langle D(\nabla_a \varepsilon),l \rangle -\langle D(\varepsilon),[a,l]_{_L} \rangle,\quad \forall l\in \bsection{L}.$$
		\item[(2)] Sequence~\eqref{Dia:jetbundle} is a short exact sequence of $A$-representations.
	\end{itemize}
\end{prop}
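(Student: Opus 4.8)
The plan is to reduce the entire statement to a single operator identity. For $a\in\Gamma(A)$ introduce the Lie-derivative operator $\mathscr{L}_a\colon \Gamma(L^*)\to\Gamma(L^*)$ defined by $\langle\mathscr{L}_a\omega,l\rangle=\rho_{_L}(a)\langle\omega,l\rangle-\langle\omega,[a,l]_{_L}\rangle$ for $l\in\Gamma(L)$, and write $\nabla^{E^*}_a$ for the dual $A$-action on $E^*$. Inspecting the defining formula shows that the proposed action on the $D$-component is precisely the commutator
\[
\nabla_a D=\mathscr{L}_a\circ D-D\circ\nabla^{E^*}_a,
\]
where $D\colon\Gamma(E^*)\to\Gamma(L^*)$ is viewed as an operator. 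I would first record the elementary properties of $\mathscr{L}$ that drive the whole argument: it is $C^\infty(M,\C)$-linear in $l$ (so $\mathscr{L}_a\omega$ is genuinely a one-form on $L$); it obeys the Leibniz rules $\mathscr{L}_a(g\omega)=g\mathscr{L}_a\omega+(\rho_{_A}(a)g)\omega$ and $\mathscr{L}_{fa}\omega=f\mathscr{L}_a\omega+\langle\omega,a\rangle\rho_{_L}^*(df)$; and it satisfies $\mathscr{L}_a(\rho_{_L}^*(dg))=\rho_{_L}^*(d(\rho_{_A}(a)g))$. Each is a one-line check using only that $\rho_{_L}$ is an anchor and that $\Gamma(A)$ is closed under $[\cdot,\cdot]_{_L}$.

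With these in hand I would verify that $(\nabla_aD,\nabla_ae)$ is again an $L$-jet. The first jet axiom amounts to evaluating $\langle(\nabla_aD)(\varepsilon),a'\rangle$ on $a'\in\Gamma(A)$: using $[a,a']_{_L}\in\Gamma(A)$, the first jet axiom for $(D,e)$, the pairing compatibility $\rho_{_A}(a)\langle\varepsilon,e\rangle=\langle\nabla^{E^*}_a\varepsilon,e\rangle+\langle\varepsilon,\nabla_ae\rangle$, and the flatness of the $A$-module $E^*$, the terms collapse to $\langle\nabla^{E^*}_{a'}\varepsilon,\nabla_ae\rangle$. The second jet axiom follows the same pattern, feeding the two Leibniz rules for $\mathscr{L}$ and the identity for $\mathscr{L}_a\rho_{_L}^*(df)$ into $(\nabla_aD)(f\varepsilon)$; the unwanted terms cancel and leave $f(\nabla_aD)(\varepsilon)+\langle\varepsilon,\nabla_ae\rangle\rho_{_L}^*(df)$. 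The remaining representation axioms (apart from flatness) are immediate from the operator identity: $C^\infty$-linearity $\nabla_{fa}=f\nabla_a$ uses $\mathscr{L}_{fa}\omega=f\mathscr{L}_a\omega+\langle\omega,a\rangle\rho_{_L}^*(df)$ together with the first jet axiom to kill the correction term, and the module Leibniz rule $\nabla_a(gD)=g\nabla_aD+(\rho_{_A}(a)g)D$ is just the Leibniz rule for $\mathscr{L}_a$.

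The one genuinely substantive point --- and the step I expect to be the main obstacle --- is flatness, $[\nabla_a,\nabla_{a'}]=\nabla_{[a,a']}$. Here the commutator form of the action pays off. I would first prove the purely $L^*$-level identity $[\mathscr{L}_a,\mathscr{L}_{a'}]=\mathscr{L}_{[a,a']}$, which unwinds to $[\rho_{_L}(a),\rho_{_L}(a')]=\rho_{_L}([a,a']_{_L})$ (the anchor is a bracket morphism) together with the Jacobi identity in $L$ in the form $[a',[a,l]_{_L}]_{_L}-[a,[a',l]_{_L}]_{_L}=-[[a,a']_{_L},l]_{_L}$. Expanding $\nabla_a\nabla_{a'}D=\mathscr{L}_a\mathscr{L}_{a'}D-\mathscr{L}_aD\nabla^{E^*}_{a'}-\mathscr{L}_{a'}D\nabla^{E^*}_a+D\nabla^{E^*}_{a'}\nabla^{E^*}_a$ and antisymmetrizing, the two mixed terms cancel, leaving
\[
[\nabla_a,\nabla_{a'}]D=(\mathscr{L}_a\mathscr{L}_{a'}-\mathscr{L}_{a'}\mathscr{L}_a)D+D(\nabla^{E^*}_{a'}\nabla^{E^*}_a-\nabla^{E^*}_a\nabla^{E^*}_{a'})=\mathscr{L}_{[a,a']}D-D\nabla^{E^*}_{[a,a']}=\nabla_{[a,a']}D,
\]
where the middle equality is exactly the $L^*$-level commutator identity together with the flatness of $E^*$. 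The $e$-component of the curvature is the curvature of $E$, which vanishes because $E$ is an $A$-module; hence $[\nabla_a,\nabla_{a'}]=\nabla_{[a,a']}$ on jets, completing the proof of (1).

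For (2) I would check that $\kappa$ and $\gamma$ intertwine the actions. Since $\gamma(D,e)=e$ and the $e$-component of $\nabla_a(D,e)$ is $\nabla_ae$, the map $\gamma$ is tautologically $A$-equivariant. For $\kappa$, I would identify $\kappa(\omega\otimes e)$ with the jet $(\varepsilon\mapsto\langle\varepsilon,e\rangle\omega,\,0)$ and observe that $A^\perp\cong B^*$ carries the dual of the Bott representation $\nabla^B_a q(l)=q([a,l]_{_L})$, whose dual action on $A^\perp$ is precisely $\mathscr{L}_a$ restricted to $\Gamma(A^\perp)$ --- note $\mathscr{L}_a$ preserves $\Gamma(A^\perp)$ because $[a,a']_{_L}\in\Gamma(A)$. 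A short computation with the operator identity then gives $\nabla_a\kappa(\omega\otimes e)=\kappa\big(\mathscr{L}_a\omega\otimes e+\omega\otimes\nabla_ae\big)$, which is exactly $\kappa$ applied to the tensor-product action $\nabla_a(\omega\otimes e)=\mathscr{L}_a\omega\otimes e+\omega\otimes\nabla_ae$. Thus $\kappa$ is $A$-equivariant, and the sequence \eqref{Dia:jetbundle} is a short exact sequence of $A$-representations.
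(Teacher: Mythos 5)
Your proposal is correct, but there is nothing in the paper to compare it against: this proposition is quoted from \cite{CMP16} and the paper supplies no proof of its own, so your argument stands as a self-contained verification. The packaging you chose is efficient: writing the action on the first component as the operator commutator $\nabla_a D=\mathscr{L}_a\circ D-D\circ\nabla^{E^*}_a$, with $\mathscr{L}_a$ the Lie-derivative on $\Gamma(L^*)$, reduces every axiom to one of your four elementary identities for $\mathscr{L}$, and it makes the flatness computation essentially formal once $[\mathscr{L}_a,\mathscr{L}_{a'}]=\mathscr{L}_{[a,a']_{_L}}$ is established (which you correctly trace to the anchor being a bracket morphism plus the Jacobi identity). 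All the individual claims check out, including the identification of $\kappa(\omega\otimes e)$ with the jet $(\varepsilon\mapsto\langle\varepsilon,e\rangle\omega,\,0)$ and of the induced action on $A^\perp$ with the dual Bott representation. One small imprecision in the write-up: for $C^\infty$-linearity $\nabla_{fa}=f\nabla_a$, the correction term $\langle D(\varepsilon),a\rangle\,\rho_{_L}^*(df)$ coming from $\mathscr{L}_{fa}$ is not killed by the first jet axiom alone --- it cancels against the term $\langle\nabla_a\varepsilon,e\rangle\,\rho_{_L}^*(df)$ produced by applying the \emph{second} jet axiom to $D(f\nabla_a\varepsilon)=D(\nabla_{fa}\varepsilon)$, with the first axiom only serving to identify the two coefficients $\langle D(\varepsilon),a\rangle=\langle\nabla_a\varepsilon,e\rangle$. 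This is a wording issue, not a gap; the cancellation does occur.
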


\subsection{Equivalence of extension classes and cohomology classes}
In this section, we prove that the extension class and Lie algebroid cohomology class are one and the same. Throughout this section, we assume that $E$ is a GH vector bundle over a regular GC manifold $M$.

Let  $\mathscr{J}^1 E$ denote the first jet bundle derived from the Lie pair $(T_\C M,\rho(L_{-}))$ as elaborated in Section \ref{Sec:jetbundle}. Then it fits into a short exact sequence of $\rho(L_{-})$-modules over $M$:
\begin{equation}\label{Dia:jetbundleGC}
	0\longrightarrow  {G^*M\otimes E}\stackrel{\kappa} \longrightarrow \mathscr{J}^1(E)\stackrel{\gamma}\longrightarrow E\longrightarrow 0,
\end{equation}
where $G^*M\otimes E$ is indeed a GH vector bundle. In the following, we will show that $\mathscr{J}^1(E)$ is also a GH vector bundle.

By choosing a splitting $j\colon T_\C M/\rho(L_{-}) \to T_\C M$, we subsequently obtain a splitting $\zeta$ for Sequence \eqref{Dia:jetbundleGC}.

\begin{lem}\label{lem:zetad-plus}
For every $e\in\mathcal{O}(E,U)$ and $f\in \mathcal{O}_M(U)$,  the following equality holds: 
\begin{equation*}
\zeta(fe)=f\zeta(e)+\kappa((d_{+}f)\otimes e).
\end{equation*}
\end{lem}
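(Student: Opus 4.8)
The plan is to specialize the general Lie-pair formula of Lemma~\ref{Lem:zeta-Liepair} to the concrete situation at hand, namely the Lie pair $(L,A)=(T_\C M,\rho(L_-))$ with quotient $B=T_\C M/\rho(L_-)$, and then to identify the abstract one-form $q^*j^*\rho_L^*(df)$ appearing there with $d_+f$ when $f$ is a generalized holomorphic function. Lemma~\ref{Lem:zeta-Liepair} already gives, for arbitrary smooth $f$,
\begin{equation*}
\zeta(fe)=f\zeta(e)+\kappa\big((q^*j^*\rho_L^*(df))\otimes e\big),
\end{equation*}
so the entire content of Lemma~\ref{lem:zetad-plus} reduces to the single pointwise identity $q^*j^*\rho_L^*(df)=d_+f$ as a section of $A^\perp=G^*M$, valid for $f\in\mathcal{O}_M(U)$.

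First I would unwind the meaning of each map in the composite $q^*j^*\rho_L^*$. Here $\rho_L=\rho$ is the anchor $T_\C M\to T_\C M$, which in this Lie pair is simply the identity (the "big" algebroid $L=T_\C M$ has anchor $\id$), so $\rho_L^*(df)=df\in T^*_\C M$. Next, $j\colon B\to L$ is the chosen splitting and $j^*\colon L^*\to B^*$ its dual, and $q^*\colon B^*\to L^*$ is the inclusion dual to the projection $q\colon L\to B$; the composite $q^*j^*\colon L^*\to L^*$ is precisely the projection onto $A^\perp=(\rho(L_-))^\perp$ determined by the splitting, i.e.\ it annihilates the image of $\rho(L_-)^*$ and restricts to the identity on $A^\perp$. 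Thus $q^*j^*\rho_L^*(df)$ is the component of $df$ lying in $A^\perp=G^*M=L_-\cap T^*_\C M$.

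The key step is then to show that for a generalized holomorphic $f$ this $A^\perp$-component of $df$ equals $d_+f$. I would invoke the decomposition $df=d_+f+d_-f$ recorded in Section~2, together with the hypothesis $f\in\mathcal{O}_M(U)$, which by definition means $d_-f=0$; hence $df=d_+f$. It remains only to check that $d_+f$ already lies in $A^\perp=G^*M$, so that the projection $q^*j^*$ leaves it unchanged and, in particular, the result is independent of the chosen splitting $j$. This last point follows from the identification $G^*M=L_-\cap T^*_\C M=(\rho(L_-))^\perp$ established just before Theorem~\ref{Thm:Cech-curvature}, combined with the fact that $d_+f$, being built from the $L_+^*$-differential, pairs trivially with $\rho(L_-)$; concretely, in canonical coordinates $d_+f=\sum_\lambda \tfrac{\partial f}{\partial z_\lambda}dz_\lambda$, which manifestly annihilates $\rho(L_-)$ and so lies in $A^\perp$.

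The main obstacle I anticipate is purely bookkeeping rather than conceptual: one must be careful that the abstract dual maps $q^*$ and $j^*$ are threaded in the correct order and that the identification $A^\perp\cong G^*M$ is the same one used implicitly in the splitting $\zeta$, so that the equality $q^*j^*df=d_+f$ holds as sections of $G^*M$ and not merely up to the splitting-dependent ambiguity. Once the vanishing of $d_-f$ is used to kill the $\rho(L_-)^*$-component of $df$, the splitting dependence disappears automatically, which is the reason the statement of Lemma~\ref{lem:zetad-plus} makes no reference to $j$; verifying this independence cleanly is the one place where I would take care, and the coordinate description of $d_+f$ is the quickest way to make it transparent.
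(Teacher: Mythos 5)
Your proposal is correct and follows essentially the same route as the paper: specialize Lemma~\ref{Lem:zeta-Liepair} to the pair $(T_\C M,\rho(L_-))$, use $d_-f=0$ to get $df=d_+f\in\mathcal{O}(G^*M,U)=(\rho(L_-))^\perp$, and note that the splitting-dependent projection $q^*j^*$ then fixes it. The paper compresses this into the single identity $\langle s,l\rangle=\langle df,jq(l)\rangle=\langle d_+f,l\rangle$, which is precisely your splitting-independence observation.
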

\begin{proof}
Since $f$ is a GH function, we know that  $df=d_{+}f\in \mathcal{O}(G^*M, U).$
By Lemma \ref{Lem:zeta-Liepair}, we have 
$$\zeta(fe)=f\zeta(e)+\kappa(s\otimes e),$$
where the section $s$ is defined by the formula
$\langle s, l\rangle=\langle df, jq(l)\rangle=\langle d_{+}f, l\rangle,$ for all $l\in\Gamma(T_\C M)$. From this definition, it immediately follows that $s=d_{+}f.$
\end{proof}

To establish the correspondence between extension classes and Lie algebroid cohomology classes, a clear elaboration of the relationship between $\mathscr{J}^1 E$ and $\mathfrak{J}^1 E$ is imperative. 
\begin{prop}
Let $M$ be a regular GC manifold. Then
$\mathscr{J}^1E$ is a GH vector bundle. 
\end{prop}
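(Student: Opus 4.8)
The plan is to prove that $\mathscr{J}^1 E$ is a GH vector bundle by exhibiting local trivializations whose transition functions have GH entries and then invoking Proposition~\ref{prop:Explicit}. In fact I expect to establish something sharper: that $\mathscr{J}^1 E$ and $\mathfrak{J}^1 E$ carry the \emph{same} transition cocycle, so that the two jet bundles are canonically isomorphic as GH vector bundles.

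First I would fix a canonical coordinate chart $(U_i;z,p,q)$ and a GH frame $\{e_\alpha\}$ of $\mathcal{O}(E,U_i)$. Using the smooth splitting $\zeta$ associated to a chosen splitting $j$ of the Lie pair, together with the GH frame $\{dz_\lambda\}$ of $G^*M$, the family $\{\zeta(e_\alpha),\,\kappa(dz_\lambda\otimes e_\alpha)\}$ is a smooth local frame of $\mathscr{J}^1 E$ over $U_i$. On an overlap $U_{ij}$ with a second canonical chart $(U_j;z',p',q')$ and GH frame $\{t_\beta\}$ related by $e_\alpha=\sum_\beta A_{\beta\alpha}t_\beta$ with $A_{\beta\alpha}\in\mathcal{O}(U_{ij})$, I would compute the change of frame. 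Since $\kappa$ is a morphism of GH vector bundles and $dz_\lambda=\sum_\mu \frac{\partial z_\lambda}{\partial z'_\mu}dz'_\mu$ inside $G^*M$, one obtains
\begin{equation*}
\kappa(dz_\lambda\otimes e_\alpha)=\sum_{\beta,\mu}A_{\beta\alpha}\frac{\partial z_\lambda}{\partial z'_\mu}\kappa(dz'_\mu\otimes t_\beta),
\end{equation*}
while Lemma~\ref{lem:zetad-plus} applied to each GH function $A_{\beta\alpha}$, after writing $d_+A_{\beta\alpha}=\sum_\mu \frac{\partial A_{\beta\alpha}}{\partial z'_\mu}dz'_\mu$, yields
\begin{equation*}
\zeta(e_\alpha)=\sum_\beta A_{\beta\alpha}\zeta(t_\beta)+\sum_{\beta,\mu}\frac{\partial A_{\beta\alpha}}{\partial z'_\mu}\kappa(dz'_\mu\otimes t_\beta).
\end{equation*}

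These are precisely Equations~\eqref{Eqt:JE-transition-sigma0} and \eqref{Eqt:JE-transition-sigma} with $\kappa$ and $\zeta$ in place of $I$ and $\sigma$, so the transition cocycle of $\mathscr{J}^1 E$ coincides with that of $\mathfrak{J}^1 E$. It then remains to verify that every entry is a GH function. The entries $A_{\beta\alpha}$ are GH by hypothesis; the coefficients $\frac{\partial z_\lambda}{\partial z'_\mu}$ are GH because Proposition~\ref{coordinates2} forces $z_\lambda$ to depend holomorphically on $(z',p',q')$, whence the Jacobian is GH; and $\frac{\partial A_{\beta\alpha}}{\partial z'_\mu}$ is GH because differentiating a GH function along the holomorphic coordinates $z'$ preserves the vanishing conditions characterizing $\mathcal{O}_M$ in canonical coordinates. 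Proposition~\ref{prop:Explicit} then delivers that $\mathscr{J}^1 E$ is a GH vector bundle. The main technical point — and the only place where the regularity of $M$ enters — is the verification that the derivative entries remain GH, which rests entirely on the holomorphic form of the coordinate change recorded in Proposition~\ref{coordinates2}; once that is secured, the argument reduces to the bookkeeping already carried out for $\mathfrak{J}^1 E$.
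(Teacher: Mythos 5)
Your proposal is correct and follows essentially the same route as the paper: build the smooth frame $\{\zeta(e_\alpha),\kappa(dz_\lambda\otimes e_\alpha)\}$, compute the transition cocycle via Lemma~\ref{lem:zetad-plus} and the chain rule, check the entries are GH, and conclude by Proposition~\ref{prop:Explicit}. Your explicit justification that the Jacobian entries $\frac{\partial z_\lambda}{\partial z'_\mu}$ and the derivatives $\frac{\partial A_{\beta\alpha}}{\partial z'_\mu}$ are GH (via Proposition~\ref{coordinates2}) is slightly more detailed than the paper's, and your observation that the cocycle agrees with that of $\mathfrak{J}^1E$ is recorded in the paper as a separate remark after Proposition~\ref{jetisomorphism}.
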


\begin{proof}
According to Definition \ref{Def:jetLiepair}, a local smooth section of $\mathscr{J}^1(E)$ on a open subset $U\subset M$ is of the form $(D,e)$ consisting of   an $\C$-linear map $D\colon\Gamma({E^*})\to \Gamma(T^*_\C M,U)$ and a local smooth section $e\in \Gamma(E, U) $ satisfying 
\begin{eqnarray*}
	\langle D(\varepsilon), a\rangle&=& \langle \nabla_a \varepsilon,e\rangle ,\\
	D(f\varepsilon) &=& fD(\varepsilon)+\langle \varepsilon,e\rangle \cdot df,
\end{eqnarray*}
for all $f\in C^\infty(U, \C)$, $\varepsilon\in\Gamma({E^*})$, and $a\in\Gamma(\rho(L_-), U)$.

Let $\{e_\alpha\}$ be a local GH basis of $\mathcal{O}(E,U)$, then any $e\in \Gamma(E, U)$ can be expressed as $\sum_{\alpha}f_\alpha e_\alpha$, where $f_\alpha\in C^\infty(U, \C)$. Given a local coordinate system  $(z,p,q)$ on $U$, the set $\{d{z}_\lambda\otimes e_\alpha\}$ forms a basis of $\mathcal{O}(G^*M\otimes E, U)$. Consequently, any local section $(D,e)$ admits a decomposition of the form
$$\sum_\alpha f_\alpha \zeta(e_\alpha)+\sum_{\lambda,\alpha}g_{\lambda\alpha} \kappa(d{z}_\lambda\otimes e_\alpha),$$ 
for some $g_{\lambda\alpha}\in C^\infty(U, \C)$.   Thus, the set $\{\zeta(e_\alpha),\kappa(d{z}_\lambda\otimes e_\alpha)\}$ constitutes  a basis for $\Gamma(\mathscr{J}^1E,U)$.

Let $V$ be another open subset of $M$ with coordinate $(z',p',q')$ satisfying $U\cap V\neq \emptyset$. Assume that  $\{t_\beta\}$ is a basis of $\mathcal{O}(E,V)$, then  $\{\zeta(t_\beta),\kappa(d{z}'_\mu\otimes t_\beta)\}$ forms a basis of $\Gamma(\mathscr{J}^1E,V)$. Assume that $e_\alpha=\sum_\beta A_{\beta \alpha}t_\beta$, where $A_{\beta\alpha}\in\mathcal{O}_M(U\cap V)$. Then we have
\begin{equation*}
\kappa(d{z}_\lambda\otimes e_\alpha)=\kappa(\sum_{\beta,\mu}\frac{\partial {z}_\lambda}{\partial {z'}_\mu}d{z'}_\mu\otimes A_{\beta \alpha}t_\beta)=\sum_{\beta,\mu} A_{\beta \alpha}\frac{\partial {z}_\lambda}{\partial {z'}_\mu}\kappa(d{z'}_\mu\otimes t_\beta).
\end{equation*}
By \cite[Example 2.8]{DGA2023}, we have $d_+A_{\beta\alpha}=\sum_{\mu}\frac{\partial A_{\beta \alpha}}{\partial {z'}_\mu}d{z'}_\mu.$
It follows from Lemma \ref{lem:zetad-plus} that
\begin{equation*}
\zeta(e_\alpha)=\sum_{\beta} A_{\beta \alpha}\zeta(t_\beta)+\sum_{\beta}\kappa(d_+A_{\beta \alpha}\otimes t_\beta)=\sum_{\beta} A_{\beta \alpha}\zeta(t_\beta)+\sum_{\beta,\mu}\frac{\partial A_{\beta \alpha}}{\partial {z'}_\mu}\kappa(d{z'}_\mu\otimes t_\beta).
\end{equation*}

We observe that the transition functions are GH functions. By Proposition \ref{prop:Explicit}, this establishes that $\mathscr{J}^1E$ is a GH vector bundle over $M$.
\end{proof}

Furthermore, we present the following result:
\begin{prop}\label{jetisomorphism}
The jet bundle $\mathscr{J}^1E$ is isomorphic to $\mathfrak{J}^1E$ as GH vector bundles.
\end{prop}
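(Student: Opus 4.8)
The plan is to exhibit an explicit isomorphism $\Theta\colon \mathscr{J}^1E\to \mathfrak{J}^1E$ of GH vector bundles by comparing the two short exact sequences \eqref{Dia:jetbundleGC} and \eqref{seq:GHVB-jet}, both of which have the same kernel $G^*M\otimes E$ and the same quotient $E$. The key observation is that both jet bundles are constructed from the same local data: a GH basis $\{e_\alpha\}$ of $\mathcal{O}(E,U)$ together with canonical coordinates $(z,p,q)$ on $U$ gives rise to the basis $\{\zeta(e_\alpha),\kappa(dz_\lambda\otimes e_\alpha)\}$ of $\Gamma(\mathscr{J}^1E,U)$ on the one hand, and to the basis $\{\sigma(e_\alpha),I(dz_\lambda\otimes e_\alpha)\}$ of $\mathcal{O}(\mathfrak{J}^1E,U)$ on the other. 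First I would define $\Theta$ locally on $U$ by $\zeta(e_\alpha)\mapsto \sigma(e_\alpha)$ and $\kappa(dz_\lambda\otimes e_\alpha)\mapsto I(dz_\lambda\otimes e_\alpha)$, i.e.\ $\Theta$ is the identity on the kernel $G^*M\otimes E$ and sends the chosen splitting of one sequence to the chosen splitting of the other.

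Next I would check that these local definitions patch together to a globally well-defined bundle map. This is precisely a comparison of transition functions. The transition laws for $\mathfrak{J}^1E$ are given by Equations \eqref{Eqt:JE-transition-sigma0} and \eqref{Eqt:JE-transition-sigma}, while the transition laws for $\mathscr{J}^1E$ were computed in the preceding proposition, namely
\begin{equation*}
\kappa(dz_\lambda\otimes e_\alpha)=\sum_{\beta,\mu} A_{\beta\alpha}\frac{\partial z_\lambda}{\partial z'_\mu}\kappa(dz'_\mu\otimes t_\beta),\qquad
\zeta(e_\alpha)=\sum_{\beta}A_{\beta\alpha}\zeta(t_\beta)+\sum_{\beta,\mu}\frac{\partial A_{\beta\alpha}}{\partial z'_\mu}\kappa(dz'_\mu\otimes t_\beta).
\end{equation*}
I would verify term by term that these two sets of formulas are identical after applying $\Theta$, which amounts to matching the coefficients $A_{\beta\alpha}\frac{\partial z_\lambda}{\partial z'_\mu}$ on the kernel part and the coefficients $A_{\beta\alpha}$ and $\frac{\partial A_{\beta\alpha}}{\partial z'_\mu}$ on the splitting part. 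Since both constructions produce the \emph{same} transition matrices, $\Theta$ commutes with the gluing data and hence defines a global morphism of GH vector bundles.

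Finally I would record that $\Theta$ is an isomorphism and that it is compatible with the two exact sequences. Because $\Theta$ restricts to the identity on $G^*M\otimes E$ and descends to the identity on $E$ (it respects $\kappa$ versus $I$ and $\gamma$ versus $\pi^1$), it fits into a commutative ladder between Sequence \eqref{Dia:jetbundleGC} and Sequence \eqref{seq:GHVB-jet}; in particular it is an isomorphism of extensions, not merely of bundles. The main obstacle I anticipate is purely bookkeeping: one must confirm that the splitting $\sigma$ for $\mathfrak{J}^1E$ (whose non-$\mathcal{O}_M$-linearity is recorded by $\sigma(fe)=f\sigma(e)+\sum_\lambda \frac{\partial f}{\partial z_\lambda}I(dz_\lambda\otimes e)$) and the splitting $\zeta$ for $\mathscr{J}^1E$ (whose failure of linearity is recorded by Lemma \ref{lem:zetad-plus} as $\zeta(fe)=f\zeta(e)+\kappa((d_+f)\otimes e)$) have the \emph{same} defect, using $d_+f=\sum_\lambda \frac{\partial f}{\partial z_\lambda}dz_\lambda$ in canonical coordinates. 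Once this identity of the two ``Christoffel defects'' is in hand, the independence of $\Theta$ from the choice of local basis and the global consistency follow immediately, and the proof is complete.
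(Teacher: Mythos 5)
Your argument is correct, but it is not the route the paper's proof takes --- in fact the paper records your approach only in the remark immediately following the proposition, as an alternative viewpoint. You construct the isomorphism by choosing, on each canonical chart, the GH frames $\{\zeta(e_\alpha),\kappa(dz_\lambda\otimes e_\alpha)\}$ and $\{\sigma(e_\alpha),I(dz_\lambda\otimes e_\alpha)\}$, matching them, and gluing by checking that the two bundles carry the same transition cocycle; the essential point, which you correctly isolate, is that the two splittings have the same non-linearity defect, since $\sigma(fe)-f\sigma(e)=\sum_\lambda\tfrac{\partial f}{\partial z_\lambda}I(dz_\lambda\otimes e)$ matches $\zeta(fe)-f\zeta(e)=\kappa\bigl((d_+f)\otimes e\bigr)$ via $d_+f=\sum_\lambda\tfrac{\partial f}{\partial z_\lambda}dz_\lambda$ in canonical coordinates (Lemma \ref{lem:zetad-plus}). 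The paper instead defines the map intrinsically and fiberwise, sending $[\phi]_x$ to the pair $(D^\phi,\phi_x)$ determined by $\langle D^\phi(\varepsilon),a\rangle=\langle\nabla_a\varepsilon,\phi_x\rangle$ and $\langle D^\phi(\varepsilon),j(b)\rangle=j(b)\langle\varepsilon,\phi\rangle$; well-definedness is checked by identifying the second pairing with $\phi_{*,x}(j(b))(f_\varepsilon)$ so that only the first jet of $\phi$ enters, and bijectivity follows from injectivity together with equality of ranks. The intrinsic construction buys canonicity --- no cover, coordinates, or frame is chosen, and compatibility with the $\rho(L_-)$-module structures (which is what Theorem \ref{Thm:extensition-Liepair} actually needs) is immediate --- whereas your cocycle comparison is more elementary and reuses the transition formulas \eqref{Eqt:JE-transition-sigma0}, \eqref{Eqt:JE-transition-sigma} already computed. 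If you take your route, state explicitly that the coincidence of transition matrices on overlaps is precisely the assertion that your locally defined $\Theta$ is independent of the chosen frame and hence glues, and keep your closing observation that $\Theta$ is the identity on $G^*M\otimes E$ and descends to the identity on $E$, so that it is an isomorphism of extensions and not merely of bundles.
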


\begin{proof}
It suffices to define an isomorphism $\Pi$ of GH vector bundles such that the following diagram commutes in the category of GH vector bundles:
\begin{equation*}\label{seq:AtiyahDBESplit}
	\xymatrix@C=2pc@R=1pc{
		0\ar[r]&G^*M\otimes E\ar[r]\ar[d]^{\id}\ar[r]^{I}
		&\mathfrak{J}^1 E\ar[r]^{\pi^1}\ar[d]^{\Pi}
		&E\ar[r]\ar[d]^{\id}
		&0\\
		0\ar[r]&G^*M\otimes E\ar[r]^{\kappa}
		&\mathscr{J}^1 E\ar[r]^{\gamma}
		&E\ar[r]
		&0.
	}
\end{equation*}

For any local GH section $\phi$ of $E$ defined in a neighborhood of a point $x\in M$, we define a bundle map $\Pi_x\colon \mathfrak{J}^1_x E\to \mathscr{J}^1_x E$ as follows: $\Pi_x([\phi]_x)=(D^\phi,\phi_x)$, where the $\C$-linear map $D^\phi\colon \Gamma(E^*)\to (T_\C^* M)_x$  is specified by the following two equations:
\begin{equation}\label{Eqt:jet-def1}\langle D^\phi(\varepsilon), a\rangle:=\langle \nabla_{a}\varepsilon, \phi_x \rangle ,\end{equation}
and
\begin{equation}\label{Eqt:jet-def2}
\langle D^\phi(\varepsilon), j(b)\rangle:=j(b) \langle \varepsilon, \phi \rangle ,
\end{equation}
for any $a\in\rho(L_{-})_x$, $b\in (T_\C M/\rho(L_-))_x$ and $\varepsilon\in \Gamma(E^*)$. It is obvious that $(D^\phi,\phi_x)$ satisfies the conditions outlined in Definition \ref{Def:jetLiepair}. 

According to Proposition 2.22 in \cite{BCSX}, we obtain the relation $\langle D^\phi(\varepsilon), j(b)\rangle=\phi_{*,x}(j(b))(f_\varepsilon)$, where $f_\varepsilon$ represents the fiberwise linear function on E that is uniquely determined by $\varepsilon\in \Gamma(E^*)$. Consequently, Equations \eqref{Eqt:jet-def1} and \eqref{Eqt:jet-def2} are independent of the specific choices made for $\phi$, ensuring that $\Pi_x$  is well-defined. Furthermore, from their local expressions, it follows that $\Pi$ constitutes a homomorphism between GH vector bundles.

Next, we proceed to demonstrate that $\Pi_x$ is a bijection. If $\Pi_x([\phi]_x)=0$, it follows that $\phi_x=0$ and $\phi_{*,x}=0$, which implies that $[\phi]_x=0$. Therefore, $\Pi_x$ is injective. 
On the other hand, since ${\rm dim}\mathfrak{J}^1_x E={\rm dim} \mathscr{J}^1_x E$, we conclude that $\Pi_x$ is also surjective. Consequently, $\Pi_x$ is a bijection. 
Thus, $\Pi$ is an isomorphism of GH vector bundles.


\end{proof}

\begin{Rem}
Comparing with transition functions of the generalized holomorphic vector bundle $\mathfrak{J}^1 E$ as described in Equations \eqref{Eqt:JE-transition-sigma0} and \eqref{Eqt:JE-transition-sigma}, both bundles $\mathscr{J}^1E$ and $\mathfrak{J}^1 E$ are associated to the same cocycle. Hence, from the point of view of transtion functions, they are also isomorphic.
\end{Rem}

By Theorem \ref{Thm:NN}, a GH vector bundle is indeed an $L_-$-module and thus a $\rho({L_-})$-module. 
Let 
\begin{equation}\label{Seq:GHVB}
	0\longrightarrow  E_1 \longrightarrow E\longrightarrow E_2\longrightarrow 0
\end{equation}
be a short exact sequence of GH vector bundles over a regular GC manifold $M$. It can be also regarded as a short exact sequence of $\rho({L_-})$-modules. Denote by  $\mathcal{A}$  the category of left modules over universal enveloping algebra $\mathcal{U}(\rho(L_{-}))$ \cites{Quantum-groupoids, CMP16} of the Lie algebroid $\rho(L_{-})$. In particular, an $\rho(L_{-})$-module over $M$  is an object in $\mathcal{A}$. 
Denote by ${\Ext}^1_{\mathcal{A}}(E_2,E_1)$  the set of isomorphic classes of extensions  of  $E_2$ by $E_1$ in the category $\mathcal{A}$.

As a consequence of Theorem \ref{Thm:NN}, we have the following lemma.
\begin{lem}\label{Lem:extmap}
Let $E_1$ and $E_2$ be two GH vector bundles, then there exists a natural injection   
$\mu\colon {\Ext}^1_{\mathcal{O}_M}(E_2,E_1) \to {\Ext}^1_{\mathcal{A}}(E_2,E_1)$.
\end{lem}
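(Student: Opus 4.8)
The plan is to realize $\mu$ as the map on extension classes induced by the forgetful functor $\mathcal{F}$ that sends a GH vector bundle to its underlying $\rho(L_-)$-module, an object of $\mathcal{A}$. The existence of this functor is exactly the content of Theorem~\ref{Thm:NN} together with the module structure $\nabla_{\rho(l)}e=\langle l,\overline{\partial}_E e\rangle$. First I would check that $\mathcal{F}$ is exact in the relevant sense: given a short exact sequence \eqref{Seq:GHVB} of GH vector bundles, the bundle maps $E_1\to E$ and $E\to E_2$ are GH morphisms, hence $\rho(L_-)$-module morphisms, and the sequence remains exact in $\mathcal{A}$. This assigns to each GH extension a well-defined $\mathcal{A}$-extension and so defines $\mu$ on extensions.

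The crux is the following observation, which I would isolate as the main step: for two GH vector bundles, a $C^\infty(M)$-linear bundle map is a GH morphism if and only if it is a $\rho(L_-)$-module morphism. The key point is that the $L_-$-action of any $\xi\in G^*M=L_-\cap T^*_{\C}M$ is zero, as recorded after Theorem~\ref{Thm:NN}; consequently the operator $\overline{\partial}_E$, a priori an $L_-$-connection, is completely recovered from the $\rho(L_-)$-action, since $\langle l,\overline{\partial}_E e\rangle=\nabla_{\rho(l)}e$ for every $l\in\Gamma(L_-)$. Evaluating the GH-compatibility condition $\overline{\partial}_{E'}\circ\theta=(\id\otimes\theta)\circ\overline{\partial}_E$ against sections of $L_-$ then shows it is equivalent to $\theta\circ\nabla_a=\nabla_a\circ\theta$ for all $a\in\Gamma(\rho(L_-))$. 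Since any morphism in $\mathcal{A}$ between modules arising from bundles is $C^\infty(M)$-linear (because $C^\infty(M)\subset\mathcal{U}(\rho(L_-))$) and hence an honest bundle map, this establishes that $\mathcal{F}$ is fully faithful.

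With full faithfulness in hand, the remaining assertions are formal. For well-definedness, an isomorphism of GH extensions is in particular an $\mathcal{A}$-isomorphism, so equivalent GH extensions have equivalent images, and $\mu$ descends to isomorphism classes. For injectivity, suppose two GH extensions $\mathcal{E},\mathcal{E}'$ satisfy $\mu[\mathcal{E}]=\mu[\mathcal{E}']$, witnessed by a $\rho(L_-)$-module isomorphism $\theta$ fitting into the extension diagram; by the main step $\theta$ is a bundle isomorphism commuting with $\overline{\partial}$, that is, a GH isomorphism, and it still makes the diagram commute, so $[\mathcal{E}]=[\mathcal{E}']$ in $\Ext^1_{\mathcal{O}_M}(E_2,E_1)$. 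Naturality in $E_1$ and $E_2$ follows from the functoriality of $\mathcal{F}$ together with the standard functoriality of $\Ext^1$ under pushout and pullback.

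The step I expect to be the main obstacle is the middle paragraph: making precise that a morphism in $\mathcal{A}$ restricts to an genuine bundle map on the modules coming from GH bundles, and that the vanishing of the $\ker\rho$-action forces the $L_-$-connection $\overline{\partial}_E$ and the $\rho(L_-)$-action $\nabla$ to carry exactly the same information. Once this identification is in place, the injectivity of $\mu$ is immediate, and one should note that $\mu$ is typically not surjective, since an arbitrary flat $\rho(L_-)$-module extension need not underlie a GH structure.
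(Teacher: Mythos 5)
Your proposal is correct and follows essentially the same route as the paper: the heart of both arguments is that a $\rho(L_-)$-module morphism between GH vector bundles is automatically a GH morphism, because the $\rho(L_-)$-action already determines $\overline{\partial}_E$ (equivalently, because $\rho(l)f=0$ for all $l\in\Gamma(L_-)$ forces $d_-f=0$ by the maximal isotropy of $L_-$), and injectivity of $\mu$ follows by upgrading the witnessing module isomorphism between extensions to a GH isomorphism. The paper carries this out concretely in local GH frames rather than via the fully-faithful-forgetful-functor language, but the content is identical.
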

\begin{proof}
It is obvious that $\mu$ is well-defined. It suffices to check that $\mu$ is injective.

Assume that $E$ and $E'$ are two GH vector bundle extension of $E_2$ by $E_1$, and $\phi\colon E\to E'$ is  an $\rho({L_-})$-module isomorphism. We aim to demonstrate that $\phi$ is also an isomorphism of GH vector bundles. Consider an open subset $U\subset M$, and assume that $\phi(e_i)=\sum_j f_{ij}t_j$, where $f_{ij}\in C^\infty(U, \C)$ and  $\{e_i\}$ and $\{t_j\}$ form  bases of $\mathcal{O}(E,U)$ and $\mathcal{O}(E',U)$, respectively. Then, we have the following relation:
$$\nabla_{\rho(l)}(\sum_j f_{ij}t_j)=\phi(\nabla_{\rho(l)} e_i)=0, \quad \forall l\in \Gamma(L_-, U).$$
This implies that $\rho(l)f_{ij}=0$ for all $l\in \Gamma(L_-, U)$. Using the fact that $L_-$ is maximal isotropic, we deduce that:
$$0=\rho(l)f_{ij}=\langle \rho(l),df_{ij}\rangle=( l, df_{ij})=( l, d_-f_{ij}).$$

Consequently, $f_{ij}$ must be a GH function. Therefore, $\phi$ is an isomorphism of GH vector bundles.

\end{proof}

\begin{Rem}
When $M$ constitutes a complex manifold, the  map mentioned in Lemma \ref{Lem:extmap} becomes a bijection. However, in the context of generalized complex manifolds, the conditions that would render this map bijective remain an open question for our future endeavors. This due to a generalized Newlander-Nirenberg theorem. We refer the reader to \cite{Wang-JGP2011} for a related result about this topic. 
\end{Rem}

Utilizing the property that a sequence of $C^\infty$ bundles admits a splitting, we can select a global $C^\infty$ splitting map $j\colon E_2\to E$ for the exact sequence \eqref{Seq:GHVB}, along with a corresponding projection $q\colon E\to E_1$. 
It is straightforward to verify that the composition $q\circ \dce\circ j$ exhibits $C^\infty(M)$-linearity, thereby qualifying as an element within the space $\Gamma\big(\rho(L_{-})^*\otimes\Hom(E_2,E_1)\big)$, where  $\dce$ encodes the $\rho(L_{-})$-module structure of $E$. Furthermore, $q\circ \dce\circ j$ emerges as a $1$-cocycle for the Lie algebroid $\rho(L_{-})$ with coefficients in the $\rho(L_{-})$-module $\Hom(E_2,E_1)$.

This observation enables us to define a map that assigns  each extension class determined by $E$ in ${\Ext}^1_{\mathcal{O}_M}(E_2,E_1)$ to the cohomology class $q\circ \dce\circ j$  in $H^1(\rho(L_{-}),\Hom(E_2,E_1))$. According to established literature, notably \cite{Huebschmann90} and \cite{CMP16}, this map is not merely well-defined but also bijective, establishing a correspondence between extension classes and cohomology classes.

\begin{lem}\cite{Huebschmann90,CMP16}
Let $E_1$ and $E_2$ be two GH vector bundles over a  GC manifold $M$.
There exists a  naturally isomorphism 
$$\nu\colon\Ext^1_{\mathcal{A}}(E_2,E_1)\to H^1(\rho(L_{-}), \Hom(E_2,E_1)).$$
\end{lem}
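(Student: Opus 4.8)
The plan is to treat this as the Lie-algebroid counterpart of the classical isomorphism $\Ext^1_{\mathcal{U}(\mathfrak{g})}(E_2,E_1)\cong H^1(\mathfrak{g},\Hom(E_2,E_1))$ for modules over the enveloping algebra of a Lie algebra, the construction of the map $\nu$ being exactly the one indicated in the paragraph preceding the statement. First I would fix an extension $0\to E_1\to E\to E_2\to 0$ in $\mathcal{A}$ and choose a $C^\infty$ splitting, that is, a bundle map $j\colon E_2\to E$ with $p\circ j=\id_{E_2}$, together with the complementary projection $q\colon E\to E_1$ satisfying $q\circ i=\id_{E_1}$, $q\circ j=0$ and $i\circ q+j\circ p=\id_E$. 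Using $q\circ j=0$ one checks that the non-$C^\infty$-linear part of $\dce$ is annihilated, so that $\omega:=q\circ\dce\circ j$ is genuinely $C^\infty(M)$-linear and hence defines a $1$-cochain in $\Gamma(\rho(L_-)^*\otimes\Hom(E_2,E_1))$. I would then verify $\dce\omega=0$ by a direct expansion in which the curvature terms of the flat $\rho(L_-)$-modules $E$, $E_1$ and $E_2$ all cancel; the resulting class $[\omega]$ is defined to be $\nu([E])$.

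Next I would establish that $\nu$ is well defined and injective simultaneously. Two splittings $j,j'$ of the same extension differ by $j'-j=i\circ s$ for a unique $s\in\Gamma(\Hom(E_2,E_1))$, and a short computation gives $q'\circ\dce\circ j'-q\circ\dce\circ j=\dce s$, so the two cochains are cohomologous; the same kind of argument applied to an isomorphism of extensions shows that equivalent extensions yield cohomologous cocycles, so $\nu$ descends to isomorphism classes. Conversely, if $\nu([E])=0$ then $q\circ\dce\circ j=\dce s$ for some $s$, and replacing $j$ by $j-i\circ s$ produces a splitting that commutes with the module structures, whence the extension is trivial; thus $\nu$ is injective.

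For surjectivity I would build an explicit inverse. Given a $1$-cocycle $\omega\in\Gamma(\rho(L_-)^*\otimes\Hom(E_2,E_1))$ with $\dce\omega=0$, I would equip the smooth bundle $E_1\oplus E_2$ with the operator
\[
\nabla_a(e_1,e_2):=\big(\nabla^{E_1}_a e_1+\omega(a)(e_2),\,\nabla^{E_2}_a e_2\big),\qquad a\in\Gamma(\rho(L_-)),
\]
and check that the closedness $\dce\omega=0$ is precisely the condition guaranteeing that this operator is flat, so that $E_1\oplus E_2$ becomes an object of $\mathcal{A}$ fitting into an extension of $E_2$ by $E_1$. Tracing through the definition of $\nu$ on this extension with the obvious splitting returns $[\omega]$, so $\nu$ is onto and therefore an isomorphism.

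The main obstacle I anticipate is not any single computation but the bookkeeping that matches the two algebraic conditions on either side of the correspondence: that the failure of $j$ to be a morphism in $\mathcal{A}$ is measured by a genuine Lie-algebroid cocycle, and conversely that the identity $\dce\omega=0$ is exactly equivalent to flatness of the operator on $E_1\oplus E_2$. Care is also needed to stay inside the category $\mathcal{A}$ throughout --- every splitting is only $C^\infty$-linear, so one must check at each stage that the objects produced carry honest flat $\rho(L_-)$-module structures and that the maps involved are $\mathcal{U}(\rho(L_-))$-morphisms --- and to confirm that the whole construction is natural in $E_1$ and $E_2$. Since the statement is cited from \cite{Huebschmann90,CMP16}, an acceptable alternative would be to identify $\mathcal{A}$ with the category of modules over the Lie--Rinehart algebra $(C^\infty(M),\Gamma(\rho(L_-)))$ and invoke the general $\Ext^1\cong H^1$ theorem proved there, but I would still spell out the map $\nu$ explicitly in order to keep the later identification of Atiyah classes transparent.
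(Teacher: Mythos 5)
Your construction of $\nu$ via $q\circ\dce\circ j$ for a smooth splitting is exactly the one the paper sets up in the paragraph preceding the lemma, and the paper then simply cites \cite{Huebschmann90,CMP16} for well-definedness and bijectivity; your verifications (a change of splitting shifts the cocycle by $\dce s$, injectivity by correcting $j$ to $j-i\circ s$, surjectivity via the twisted flat structure on $E_1\oplus E_2$ whose flatness is equivalent to $\dce\omega=0$) are the standard ones and are correct. So this is essentially the same approach, with the details the paper delegates to the references filled in.
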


Below is our main result in this section.
\begin{Thm}\label{Thm:extensition-Liepair}
Let $E$ be a GH vector bundle over a regular GC manifold $M$. The natural injection 
$$\Psi\colon\Ext^1_{\mathcal{O}_M}(E,G^*M\otimes E)\to H^1(\rho(L_{-}), G^*M\otimes \End (E))$$
 maps the extension class $A(E)$ of the short exact sequence of GH vector bundles \eqref{seq:GHVB-jet} to the Lie algebroid cohomology class $[R^{\nabla}]$.
\end{Thm}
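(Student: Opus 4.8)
The plan is to unwind the definition $\Psi=\nu\circ\mu$ and then compute directly. Since $\mu$ only forgets the GH structure, and since by Proposition~\ref{jetisomorphism} the jet sequence \eqref{seq:GHVB-jet} defining $A(E)$ is isomorphic, as a short exact sequence of $\rho(L_-)$-modules, to the Lie-pair jet sequence \eqref{Dia:jetbundleGC}, the class $\mu(A(E))\in\Ext^1_{\mathcal{A}}(E,G^*M\otimes E)$ is represented by \eqref{Dia:jetbundleGC}. By the construction of $\nu$ I may then compute $\Psi(A(E))=[q\circ\dce\circ j]$ using any $C^\infty$ bundle splitting $j$ of \eqref{Dia:jetbundleGC}, where $\dce$ denotes the $\rho(L_-)$-module differential attached to the module structure on $\mathscr{J}^1E$ recalled in Section~\ref{Sec:jetbundle}.

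The crux is choosing the splitting wisely. A bare vector-bundle splitting $j_0\colon T_\C M/\rho(L_-)\to T_\C M$ only yields the section-level map $\zeta$, which by Lemma~\ref{Lem:zeta-Liepair} is not $C^\infty(M)$-linear and hence is inadmissible for $\nu$. Instead I would take the very $T_\C M$-connection $\nabla$ extending the flat $\rho(L_-)$-action that defines $R^\nabla$, and promote it to a genuine $C^\infty$ bundle splitting $s_\nabla\colon E\to\mathscr{J}^1E$, $e\mapsto(D^\nabla_e,e)$, by setting $\langle D^\nabla_e(\varepsilon),l\rangle:=\rho(l)\langle\varepsilon,e\rangle-\langle\varepsilon,\nabla_l e\rangle$ for all $l\in\Gamma(T_\C M)$. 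A brief check shows that $(D^\nabla_e,e)$ satisfies the two identities of Definition~\ref{Def:jetLiepair} (the $\rho(L_-)$-identity being exactly the Leibniz rule for the induced connection on $E^*$, since $\nabla$ extends the action), that $\gamma\circ s_\nabla=\id$, and, crucially, that $s_\nabla(fe)=f\,s_\nabla(e)$, so $s_\nabla$ is $C^\infty(M)$-linear. The complementary projection is $q(D,e)=\kappa^{-1}\big((D-D^\nabla_e,0)\big)$.

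With these choices I would evaluate $q\circ\dce\circ s_\nabla$. For $a\in\Gamma(\rho(L_-))$ and $e\in\Gamma(E)$ the module structure on $\mathscr{J}^1E$ gives $\dce(s_\nabla e)(a)=\nabla_a(D^\nabla_e,e)=(\nabla_a D^\nabla_e,\nabla_a e)$, whence $q\big(\dce(s_\nabla e)(a)\big)=\kappa^{-1}\big((\nabla_a D^\nabla_e-D^\nabla_{\nabla_a e},0)\big)$. Expanding $\langle(\nabla_a D^\nabla_e)(\varepsilon),l\rangle$ through the explicit formula for $\nabla_a D$ and inserting the definition of $D^\nabla$, the Leibniz rule $\rho(a)\langle\varepsilon,e\rangle=\langle\nabla_a\varepsilon,e\rangle+\langle\varepsilon,\nabla_a e\rangle$ together with the anchor identity $[\rho(a),\rho(l)]=\rho([a,l])$ force every purely first-order contribution to cancel against the matching term in $\langle D^\nabla_{\nabla_a e}(\varepsilon),l\rangle$; what survives is precisely the curvature term $\langle\varepsilon,(\nabla_a\nabla_l-\nabla_l\nabla_a-\nabla_{[a,l]})e\rangle$ (up to an overall sign). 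Reading this back through $\kappa^{-1}$ and the identification $G^*M=(T_\C M/\rho(L_-))^*$ recovers exactly the defining formula of $R^\nabla$, so the representative $1$-cocycle $q\circ\dce\circ s_\nabla$ equals $R^\nabla$ and therefore $\Psi(A(E))=[R^\nabla]$.

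The main obstacle is the bookkeeping in this last computation: one must track the induced connection on $E^*$ and the bracket term carefully to confirm that all non-tensorial pieces cancel and only the curvature remains, and to pin down the sign against the conventions fixing $\kappa$ and $\nu$. As an independent consistency check, one may instead verify that the triangle $\Psi=\Pi_1\circ\Phi$ from the introduction commutes and then combine Theorem~\ref{Thm:2main} with Theorem~\ref{Thm:Cech-curvature}, since in that case $\Psi(A(E))=\Pi_1\big(\Phi(A(E))\big)=\Pi_1[\alpha_E]=[R^\nabla]$.
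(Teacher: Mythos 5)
Your proposal is correct, but it takes a genuinely different route on the key step. The paper's proof is a two-line reduction: it invokes Theorem 2.34 of \cite{CMP16}, which already asserts that under the isomorphism $\nu\colon\Ext^1_{\mathcal{A}}(E,G^*M\otimes E)\to H^1(\rho(L_-),G^*M\otimes\End(E))$ the extension class of the Lie-pair jet sequence \eqref{Dia:jetbundleGC} is sent to $[R^\nabla]$, and then only needs Proposition \ref{jetisomorphism} to transport this statement to the geometric jet sequence \eqref{seq:GHVB-jet}. You use Proposition \ref{jetisomorphism} in exactly the same way, but instead of citing \cite{CMP16} you reprove the relevant half of that theorem from scratch: your splitting $s_\nabla(e)=(D^\nabla_e,e)$ with $\langle D^\nabla_e(\varepsilon),l\rangle=\rho(l)\langle\varepsilon,e\rangle-\langle\varepsilon,\nabla_le\rangle$ is indeed $C^\infty(M)$-linear (unlike the $\zeta$ of Lemma \ref{Lem:zeta-Liepair}, which comes from splitting $T_\C M$ rather than from a connection), it satisfies both identities of Definition \ref{Def:jetLiepair}, and the cancellation you describe in $\nabla_aD^\nabla_e-D^\nabla_{\nabla_ae}$ does leave exactly $\mp\langle\varepsilon,(\nabla_a\nabla_l-\nabla_l\nabla_a-\nabla_{[a,l]})e\rangle$, i.e.\ the curvature cocycle of \eqref{Eqt：curvature} up to the sign convention built into $\kappa$ and $\nu$. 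What your approach buys is self-containedness and an explicit cocycle-level verification; what the paper's approach buys is brevity and a cleaner separation of the generalized-complex input (Proposition \ref{jetisomorphism}) from the general Lie-pair machinery. One caveat: your closing ``consistency check'' via $\Psi=\Pi_1\circ\Phi$ is not independent as stated, since the commutativity of that triangle is a consequence of all three comparison theorems rather than something established beforehand; treat it as a plausibility check only, not as an alternative proof.
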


\begin{proof}
By Theorem 2.34 in \cite{CMP16}, the natural isomorphism 
$$\Ext^1_{\mathcal{A}}(E,G^*M\otimes E)\to H^1(\rho(L_{-}), G^*M\otimes \End (E))$$
maps the extension class of the short exact sequence \eqref{seq:GHVB-jet} of $\rho(L_{-})$-modules  to the Lie algebroid cohomology class $[R^{\nabla}]$. Therefore, to complete the argument, it suffices to verify that $\mathscr{J}^1 E$ and $\mathfrak{J}^1 E$  are isomorphic as $\rho(L_{-})$-modules. According to Proposition \ref{jetisomorphism}, this desired result is achieved.
\end{proof}

Applying Theorem \ref{Thm:extensition-Liepair}, we recover a result of Lang-Jia-Liu \cite{DGA2023}:
\begin{Cor}
Let $E$ be a GH vector bundle over a regular GC manifold $M$. Then the Atiyah class for the Lie pair $(T_\C M, \rho(L_-))$ vanishes if and only if the short exact sequence \eqref{seq:GHVB-jet} splits.
\end{Cor}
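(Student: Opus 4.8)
The plan is to read this corollary off from Theorem~\ref{Thm:extensition-Liepair} by combining the injectivity of $\Psi$ with the standard characterization of split extensions by their extension class. First I would invoke Theorem~\ref{Thm:extensition-Liepair}, which records the identity $\Psi(A(E)) = [R^{\nabla}]$, where $A(E) \in \Ext^1_{\mathcal{O}_M}(E, G^*M\otimes E)$ is the extension class of the short exact sequence \eqref{seq:GHVB-jet} and $[R^{\nabla}] \in H^1(\rho(L_-), G^*M\otimes \End(E))$ is the Atiyah class of the Lie pair $(T_\C M, \rho(L_-))$. Since $\Psi = \nu\circ\mu$ is injective --- being the composite of the injection $\mu$ of Lemma~\ref{Lem:extmap} with the isomorphism $\nu$ --- this identity yields at once the equivalence
$$[R^{\nabla}] = 0 \quad\Longleftrightarrow\quad A(E) = 0.$$

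It then remains to connect the vanishing of the extension class $A(E)$ with the splitting of \eqref{seq:GHVB-jet}. This is the elementary fact that the class of an extension in $\Ext^1$ vanishes precisely when the extension is trivial, i.e.\ admits a splitting in the ambient category, here the category of GH vector bundles. Concretely, $A(E) = 0$ means that \eqref{seq:GHVB-jet} is equivalent to the trivial extension with middle term $(G^*M\otimes E)\oplus E$, which carries the obvious splitting; conversely a GH splitting of \eqref{seq:GHVB-jet} identifies it with the trivial extension and forces $A(E) = 0$. Chaining the two equivalences gives that $[R^{\nabla}] = 0$ if and only if \eqref{seq:GHVB-jet} splits, which is the assertion.

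I do not expect a genuine obstacle here, since all the analytic and geometric content is already packaged into Theorem~\ref{Thm:extensition-Liepair} and, upstream of it, into Proposition~\ref{jetisomorphism}. The only points requiring a moment of care are that \emph{splitting} must be understood in the category of GH vector bundles, so that the $\Ext^1$-characterization applies verbatim, and that the injectivity of $\Psi$ is genuinely in force; both are supplied earlier in the excerpt, the latter through Lemma~\ref{Lem:extmap} together with the isomorphism $\nu$. Accordingly the argument is a short formal deduction rather than a new computation.
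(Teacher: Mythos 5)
Your proposal is correct and is exactly the deduction the paper intends: the corollary is stated as an immediate consequence of Theorem~\ref{Thm:extensition-Liepair}, using the injectivity of $\Psi=\nu\circ\mu$ (Lemma~\ref{Lem:extmap} plus the isomorphism $\nu$) together with the standard fact that an extension class vanishes precisely when the sequence splits in the relevant category. No further comment is needed.
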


\vspace{20pt}  

\noindent \textbf{Ackonowledgments.} The author would like to thank Professor Zhangju Liu for fruitful discussions and useful comments.

\vspace{20pt}  

\begin{bibdiv}
	\begin{biblist}
		
		\parskip = 0.9em 

    \bib{Abouzaid-Boyarchenko}{article}{
   author={Abouzaid, M.},
   author={Boyarchenko, M.},
   title={Local structure of generalized complex manifolds},
   journal={J. Symplectic Geom.},
   volume={4},
   date={2006},
   number={1},
   pages={43--62},
   issn={1527-5256},
   review={\MR{2240211}},
}
        
        \bib{Atiyah}{article}{
	author={Atiyah, M.},
	title={Complex analytic connections in fibre bundles},
	journal={Trans. Amer. Math. Soc.},
	volume={85},
	date={1957},
	pages={181--207},
	issn={0002-9947},
	review={\MR{86359}},
}

\bib{Bailey}{article}{
   author={Bailey, M.},
   title={Local classification of generalized complex structures},
   journal={J. Differential Geom.},
   volume={95},
   date={2013},
   number={1},
   pages={1--37},
   issn={0022-040X},
   review={\MR{3128977}},
}

\bib{BCSX}{article}{
			author={Bandiera, R.},
			author={Chen, Z.},
			author={Sti$\acute{e}$non, M.},
			author={Xu, P.},
			title={Shifted Derived Poisson Manifolds Associated with Lie Pairs},
			journal={Comm. Math. Phys.},
			volume={375},
			date={2020},
			number={3},
			pages={1717--1760},
			review={\MR{4091493}},
		}

\bib{CLangX}{article}{
   author={Chen, Z.},
   author={Lang, H.},
   author={Xiang, M.},
   title={Atiyah classes of strongly homotopy Lie pairs},
   journal={Algebra Colloq.},
   volume={26},
   date={2019},
   number={2},
   pages={195--230},
   issn={1005-3867},
   review={\MR{3947420}},
}

\bib{CLX}{article}{
   author={Chen, Z.},
   author={Liu, Z.},
   author={Xiang, M.},
   title={Kapranov's construction of sh Leibniz algebras},
   journal={Homology Homotopy Appl.},
   volume={22},
   date={2020},
   number={1},
   pages={141--165},
   issn={1532-0073},
   review={\MR{4031996}},
}

        \bib{CMP16}{article}{
   author={Chen, Z.},
   author={Sti\'{e}non, M.},
   author={Xu, P.},
   title={From Atiyah classes to homotopy Leibniz algebras},
   journal={Comm. Math. Phys.},
   volume={341},
   date={2016},
   number={1},
   pages={309--349},
   issn={0010-3616},
   review={\MR{3439229}},
}

        \bib{Grandini-Poon-Rolle}{article}{
   author={Grandini, D.},
   author={Poon, Y.},
   author={Rolle, B.},
   title={Differential Gerstenhaber algebras of generalized complex
   structures},
   journal={Asian J. Math.},
   volume={18},
   date={2014},
   number={2},
   pages={191--218},
   issn={1093-6106},
   review={\MR{3217633}},
}

        \bib{Gualtieri}{article}{
   author={Gualtieri, M.},
   title={Generalized complex geometry},
   journal={Ann. of Math. (2)},
   volume={174},
   date={2011},
   number={1},
   pages={75--123},
   issn={0003-486X},
   review={\MR{2811595}},
}
		
		\bib{Hitchin03}{article}{
   author={Hitchin, N.},
   title={Generalized Calabi-Yau manifolds},
   journal={Q. J. Math.},
   volume={54},
   date={2003},
   number={3},
   pages={281--308},
   issn={0033-5606},
   review={\MR{2013140}},
}

\bib{Hitchin11}{article}{
   author={Hitchin, N.},
   title={Generalized holomorphic bundles and the $B$-field action},
   journal={J. Geom. Phys.},
   volume={61},
   date={2011},
   number={1},
   pages={352--362},
   issn={0393-0440},
   review={\MR{2747007}},
}

\bib{Hong}{article}{
   author={Hong, W.},
   title={Atiyah classes of Lie bialgebras},
   journal={J. Lie Theory},
   volume={29},
   date={2019},
   number={1},
   pages={263--275},
   issn={0949-5932},
   review={\MR{3904794}},
}

\bib{Huebschmann90}{article}{
   author={Huebschmann, J.},
   title={Poisson cohomology and quantization},
   journal={J. Reine Angew. Math.},
   volume={408},
   date={1990},
   pages={57--113},
   issn={0075-4102},
   review={\MR{1058984}},
}

\bib{Huybrechts}{book}{
   author={Huybrechts, D.},
   title={Complex geometry},
   series={Universitext},
   note={An introduction},
   publisher={Springer-Verlag, Berlin},
   date={2005},
   pages={xii+309},
   isbn={3-540-21290-6},
   review={\MR{2093043}},
}

\bib{Kapranov}{article}{
   author={Kapranov, M.},
   title={Rozansky-Witten invariants via Atiyah classes},
   journal={Compositio Math.},
   volume={115},
   date={1999},
   number={1},
   pages={71--113},
   issn={0010-437X},
   review={\MR{1671737}},
}

		\bib{Kontsevich}{article}{
   author={Kontsevich, M.},
   title={Deformation quantization of Poisson manifolds},
   journal={Lett. Math. Phys.},
   volume={66},
   date={2003},
   number={3},
   pages={157--216},
   issn={0377-9017},
   review={\MR{2062626}},
}

        	\bib{DGA2023}{article}{
			author={Lang, H.},
			author={Jia, X.},
			author={Liu, Z.},
			title={The Atiyah class of generalized holomorphic vector bundles},
			journal={Differential Geom. Appl.},
			volume={90},
			date={2023},
			pages={Paper No. 102031, 25},
			issn={0926-2245},
			review={\MR{4598904}},
		}
		
		\bib{LWX}{article}{
   author={Liu, Z.},
   author={Weinstein, A.},
   author={Xu, P.},
   title={Manin triples for Lie bialgebroids},
   journal={J. Differential Geom.},
   volume={45},
   date={1997},
   number={3},
   pages={547--574},
   issn={0022-040X},
   review={\MR{1472888}},
}

\bib{MSX}{article}{
   author={Mehta, R.},
   author={Sti\'{e}non, M.},
   author={Xu, P.},
   title={The Atiyah class of a dg-vector bundle},
   language={English, with English and French summaries},
   journal={C. R. Math. Acad. Sci. Paris},
   volume={353},
   date={2015},
   number={4},
   pages={357--362},
   issn={1631-073X},
   review={\MR{3319134}},
}

\bib{Molino}{article}{
   author={Molino, P.},
   title={Classe d'Atiyah d'un feuilletage et connexions transverses
   projetables},
   language={French},
   journal={C. R. Acad. Sci. Paris S\'{e}r. A-B},
   volume={272},
   date={1971},
   pages={A779--A781},
   issn={0151-0509},
   review={\MR{281224}},
}

\bib{Poon2021}{article}{
   author={Poon, Y.},
   title={Abelian complex structures and generalizations},
   journal={Complex Manifolds},
   volume={8},
   date={2021},
   number={1},
   pages={247--266},
   review={\MR{4303250}},
}

\bib{Quantum-groupoids}{article}{
   author={Xu, P.},
   title={Quantum groupoids},
   journal={Comm. Math. Phys.},
   volume={216},
   date={2001},
   number={3},
   pages={539--581},
   issn={0010-3616},
   review={\MR{1815717}},
}

\bib{Wang-JGP2011}{article}{
   author={Wang, Y.},
   title={Generalized holomorphic structures},
   journal={J. Geom. Phys.},
   volume={61},
   date={2011},
   number={10},
   pages={1976--1984},
   issn={0393-0440},
   review={\MR{2822464}},
}

\bib{MR3157903}{article}{
   author={Wang, Y.},
   title={Deformations of generalized holomorphic structures},
   journal={J. Geom. Phys.},
   volume={77},
   date={2014},
   pages={72--85},
   issn={0393-0440},
   review={\MR{3157903}},
}

	\end{biblist}
\end{bibdiv}

\end{document}